\newtheorem{theorem}{Theorem}[section]
\newtheorem{corollary}[theorem]{Corollary}
\newtheorem{proposition}[theorem]{Proposition}
\newtheorem{lemma}[theorem]{Lemma}
\theoremstyle{definition}
\newtheorem{definition}[theorem]{Definition}
\newtheorem{remark}[theorem]{Remark}
\newtheorem{question}[theorem]{Question}
\newtheorem{example}[theorem]{Example}
\theoremstyle{plain}
\newtheorem{claim}{Claim}
\newtheoremstyle{named}{}{}{\itshape}{}{\bfseries}{.}{.5em}{\thmnote{#3}}
\theoremstyle{named}
\newtheorem*{namedtheorem}{Theorem}
\DeclareMathOperator{\Inv}{Inv}
\DeclareMathOperator{\SGamma}{\textit{S} \, \Gamma}
\DeclareMathOperator{\Language}{\emph{L}}
\DeclareMathOperator{\Lin}{Lin} %linear graph
\begin{document}

\title[Subsemigroups of monogenic free inverse semigroup]{\normalsize Generators and presentations of inverse subsemigroups of the monogenic free inverse semigroup}

%%%%%%%%%%%%%%%%%%%%%%%%%%%%%%%%%%%%
% AMS CLASSIFICATION AND KEY WORDS
%%%%%%%%%%%%%%%%%%%%%%%%%%%%%%%%%%%%

\noindent\subjclass[2020]{20M05, 20M18}
%% 20M05 Free semigroups, generators and relations, word problems 
%% 20M18  Inverse semigroups

%% Key words: free inverse semigroup, monogenic, generators, presentations, idempotent, semilattice.

%\email{\{jwc21, nik.ruskuc\}@st-andrews.ac.uk}

%\address{School of Mathematics and Statistics, University of St Andrews, St Andrews KY16 9SS, Scotland, UK}

\keywords{Free inverse semigroup, monogenic, generators, presentations, semilattice}

%\thanks{The second author is supported by the Engineering and Physical Sciences Research Council [EP/V003224/1]}

\date{\today}

%%%%%%%%%%%%
% ABSTRACT
%%%%%%%%%%%%
%%%%%%%%%%%%%%%%%%%%%%%%%%%%%%%%%%%%%%%%%%%%%%%%%%%%%%%%%%%%%%%%%%%%%%%%%%%%%%%%%%%%%%%%%%%%%%%%%%%%%%%%%%%%%%%%%%%%%%%%%%%%%%%%%%%%%%%%%%%%%%%%%%%%%%%%
%%%ABSTRACT%%%

\maketitle

\begin{center}
{\normalsize
JUNG WON CHO%
\hspace{-.00em}\footnote{School of Mathematics and Statistics, University of St Andrews, St Andrews, Scotland,
UK. {\it Email: } {\tt jwc21@st-andrews.ac.uk}}
AND 
NIK RU\v{S}KUC
\hspace{-.20em}\footnote{School of Mathematics and Statistics, University of St Andrews, St Andrews, Scotland,
UK. {\it Email: }{\tt  nik.ruskuc@st-andrews.ac.uk. }{Supported by the Engineering and Physical Sciences Research Council [EP/V003224/1]}}
}
\end{center}

\begin{abstract}
It was proved by Oliveira and Silva (2005) that every finitely generated inverse subsemigroup of the monogenic free inverse semigroup $FI_1$ is finitely presented. 
The present paper continues this development, and gives generating sets and presentations for general (i.e.\ not necessarily finitely generated) inverse subsemigroups of $FI_1$. For an inverse semigroup $S$ and an inverse subsemigroup $T$ of $S$, we say $S$ is \textit{finitely generated modulo} $T$ if there is a finite set $A$ such that $S = \langle T, A \rangle$. Likewise, we say that $S$ is \textit{finitely presented modulo} 
$T $ if $S$ can be defined by a presentation of the form
$\Inv \langle X, Y \mid R, Q\rangle$, where $\Inv\langle X\mid R\rangle$ is a presentation for $T$ and $Y$ and $Q$ are finite. We show that every inverse subsemigroup $S$ of $FI_1$ is finitely generated modulo its semilattice of idempotents $E(S)$. 
By way of contrast, we show that when $S\neq E(S)$, it can never be finitely presented modulo $E(S)$. 
However, in the process we establish some nice (albeit infinite) presentations for $S$ modulo $E(S)$.
\end{abstract}

%%%%%%%%%%%%%%%%%%%%%%%%%%%%%%%%%%%%%%%%%%%%%%%%%%%%%%%%%%%%%%%%%%%%%%%%%%%%%%%%%%%%%%%%%%%%%%%%%%%%%%%%%%%%%%%%%%%%%%%%%%%%%%%%%%%%%%%%%%%%%%%%%%%%%%%%
%%%INTRODUCTION%%%
\section{Introduction}
\label{sec:intro}

Inverse semigroups form a variety of algebras, and therefore free inverse semigroups are guaranteed to exist \cite{schein_variety}. 
Among the free inverse semigroups, the monogenic one (denoted $FI_1$) has much simpler structure than the rest, and yet it is considerably more complicated
than its `plain' semigroup or group counterparts, namely the monogenic free  semigroup $\mathbb{N}$ or the cyclic free group $\mathbb{Z}$.
For example, both $\mathbb{Z}$ and $\mathbb{N}$ are commutative, 
whereas $FI_1$ is not. 
Furthermore,  every subgroup of $\mathbb{Z}$ is cyclic, every subsemigroup of $\mathbb{N}$ is finitely generated (see \cite{natural}), but $FI_1$ contains non-finitely generated inverse subsemigroups, e.g. its semilattice of idempotents.

The purpose of this paper is to consider finite generation and presentability properties of  arbitrary inverse subsemigroups of $FI_1$. The starting point is the following:

\begin{namedtheorem}[Theorem 1 \normalfont(\citet{Oliveira2005})]
\label{thm:o&s}
Every finitely generated inverse subsemigroup of the monogenic free inverse semigroup is finitely presented as an inverse semigroup.
\end{namedtheorem}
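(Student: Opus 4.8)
The plan is to split into two cases according to the image $\sigma(S)$ of $S$ under the maximum group homomorphism $\sigma\colon FI_1\to\mathbb Z$, $x\mapsto1$. As $S$ is an inverse subsemigroup, $\sigma(S)$ is a subsemigroup of $\mathbb Z$ closed under negation, so $\sigma(S)=d\mathbb Z$ for some $d\ge0$. If $d=0$ then $\sigma(s)=0$ for every $s\in S$; since $FI_1$ is $E$-unitary this forces $S\subseteq E(FI_1)$, so $S$ is a subsemilattice of $E(FI_1)$. A semilattice generated by $k$ elements has at most $2^{k}-1$ elements (it is exhausted by the meets of nonempty subsets of the generators), so $S$ is finite, and every finite inverse semigroup is finitely presented (take its multiplication table as relations). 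Hence from now on $d\ge1$.

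For $d\ge1$ I would work with the Munn-tree model of $FI_1$: each element is a pair $(A,m)$ with $A\subseteq\mathbb Z$ a finite interval containing $0$ and $m$; multiplication is $(A,m)(B,n)=(A\cup(B+m),\,m+n)$, inversion is $(A,m)^{-1}=(A-m,-m)$, and $(A,m)$ is determined by $A$ and $m$ alone ($FI_1$ is combinatorial). Fix a finite, inverse-closed generating set $\{g_1,\dots,g_k\}$ of $S$, write $g_i=(A_i,m_i)$, and let $C$ bound the radii of the $A_i$. Reading a product $g_{i_1}\cdots g_{i_n}=(A,N)$, one sees that $A$ is the interval $A_{i_1}\cup(A_{i_2}+p_1)\cup\cdots\cup(A_{i_n}+p_{n-1})$ spanned by the trajectory $0=p_0,p_1,\dots,p_n=N$ of partial sums of the $m_{i_j}$, each step of size at most $\max_i|m_i|$ and each piece of radius at most $C$. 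Writing $(a,b,m)$ for the element with tree $[a,b]$ (so $a\le0\le b$ and $a\le m\le b$) and displacement $m$, the set $S$ is thus completely described by which triples $(a,b,m)$ are ``reachable'' by such trajectories; in particular $E(S)$ is described by which left extents $-a$ and which right extents $b$ occur.

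The heart of the proof is an eventual-periodicity statement for $S$. Following a trajectory while recording the datum $\bigl(\min(p_j-a_j,K),\ \min(b_j-p_j,K),\ p_j\bmod d\bigr)$ --- where $[a_j,b_j]$ is the interval built from the first $j$ letters and $K=K(C,d)$ is a suitable constant --- organises the reachable triples $(a,b,m)$ into a graph having a finite ``core'' together with finitely many ``arms'' on which the transition structure is invariant under the obvious translation. In particular the sets of achievable left and right extents are eventually periodic, so $E(S)$ is eventually periodic; and, more importantly, one can extract a threshold $n_0$, a finite subset $F\subseteq S$, and an element $t\in S$ with $\sigma(t)=d$, such that $S=\langle F\rangle$ and every element of $S$ of displacement $\pm nd$ with $n\ge n_0$ arises from $t^{\pm n}$ and the elements of $F$ in a way that is uniform in $n$ (up to translating the underlying interval by $\pm d$). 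Consequently such elements satisfy a family of relations $r_n$ of a single fixed shape over the finite generating set $F$.

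Finally, one passes from eventual periodicity to finite presentability. Starting from the presentation of $S$ on $F$ whose relators are a finite ``core'' together with the uniform family $\{r_n:n\ge n_0\}$, a Tietze-type induction shows that each $r_{n+1}$ is a consequence of $r_n$ and the core relators --- morally $r_{n+1}$ is obtained from $r_n$ by right-multiplying by a fixed word and absorbing idempotents via fixed relations --- so all but finitely many relators are redundant and $S$ is finitely presented. I expect the main obstacle to be exactly this periodicity package: setting up the ``finite core plus translation-invariant arms'' picture rigorously for an arbitrary generating set (which need be neither symmetric nor ``interval-shaped''), pinning down invariants so that the surviving relations genuinely have one uniform shape, and then checking that the resulting infinite relation scheme really does collapse under the induction. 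The $d=0$ reduction and the finite-semilattice fact are routine.
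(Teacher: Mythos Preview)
The paper does not contain a proof of this statement: Theorem~1 is quoted from \cite{Oliveira2005} and used as a black box throughout (see the Introduction, where it is introduced as ``the starting point'', and the proof of \nameref{thm:new_S_bar}, which invokes it only to pass from finite generation to finite presentability). There is therefore nothing in the paper to compare your argument against.

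As for the proposal itself, it is a plan rather than a proof, and you say as much in the final paragraph. The $d=0$ case is fine. For $d\ge1$, the outline --- track a bounded amount of data along trajectories of partial sums, extract eventual periodicity, and then collapse an infinite but uniform family of relations by induction --- is in the right spirit, and indeed the structural lemma the present paper quotes from \cite{Oliveira2005} (Lemma~\ref{lem:O&S}) is precisely a periodicity statement of this flavour: every element of $S$ has the shape $(-(a_{\min}+xp+r),qp,b_{\min}+yp+s)$ with $-x\le q\le y$. But the hard content is exactly where you locate it: turning ``eventually periodic reachable triples'' into a finite presentation requires showing that the word problem for $S$ is governed by finitely many relation \emph{schemes} and then that each scheme collapses. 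Your description of this step (``morally $r_{n+1}$ is obtained from $r_n$ by right-multiplying by a fixed word and absorbing idempotents'') is not yet an argument; in particular you have not said what the relations $r_n$ are, nor why the core relators suffice to perform the absorption uniformly in $n$. Without that, the proposal remains a heuristic. If you want to pursue this line, the original paper \cite{Oliveira2005} carries out a version of it in detail; the present paper only consumes the result.
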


As the key stepping stone towards our main results we prove the following:

\begin{namedtheorem}[\nameref{thm:new_S_bar}]
If $S$ is an inverse subsemigroup of the monogenic free inverse semigroup generated by non-idempotent elements, then $S$ is finitely generated, and hence finitely presented.
\end{namedtheorem}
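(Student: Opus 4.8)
The plan is to pass to the Munn representation of $FI_1$, in which each element is encoded by a triple $(\ell,r,m)$, where $\ell\le 0\le r$ records the interval $[\ell,r]$ swept out on the integer line $\mathbb{Z}$, and $m\in[\ell,r]$ is the terminal vertex; multiplication is ``translate the second interval so its origin sits at the terminal vertex of the first, take the union, and add the terminal vertices'', the inverse of $(\ell,r,m)$ is $(\ell-m,\,r-m,\,-m)$, and the idempotents are exactly the triples with $m=0$. The first point is that the terminal-vertex map $(\ell,r,m)\mapsto m$ is a homomorphism onto $\mathbb{Z}$ (the maximal group image), so its image on $S$ is a subgroup $d\mathbb{Z}$ with $d\ge 1$, the inequality being strict precisely because $S$ is generated by non-idempotents, i.e.\ by elements of nonzero slope. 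Since $d$ is a $\mathbb{Z}$-combination of finitely many generator slopes, I would first fix finitely many generators $a_1,\dots,a_t\in A$ and a \emph{drift element} $c=(\ell_c,r_c,d)\in S$ of slope $d$ built from them; note that $c^n=(\ell_c,\,r_c+(n-1)d,\,nd)$ has slope $nd$ and, crucially, tails $(-\ell_c,\,r_c-d)$ that do not depend on $n$. Because $\langle F\rangle$ is automatically an inverse subsemigroup of $S$ for any $F\subseteq S$, it suffices to produce a finite $F$ with $A\subseteq\langle F\rangle$.

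For a positive-slope element I would work with the parameters $(m,\lambda,\rho)$, where $\lambda=-\ell\ge0$ is the left tail and $\rho=r-m\ge0$ is the right tail, and isolate three ``moves'' realised by multiplication inside $S$. The two tail-extension moves increase one tail by $d$ while fixing the slope and the other tail: for instance, left-multiplying $b=(\ell_b,r_b,m)$ by the idempotent $c^{-1}(bb^{-1})=(\min(\ell_c,\ell_b)-d,\ \max(r_c,r_b)-d,\ 0)$ sends $\ell_b\mapsto\ell_b-d$, i.e.\ $\lambda\mapsto\lambda+d$, provided $b$ already reaches at least as far as $c$ on both sides, with a symmetric right-tail move. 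The slope move is a composite: left-multiplying by $c$ gives slope $m+d$ with the right tail preserved but the left tail dropped by $d$, and one restores the left tail by a single application of the left tail-extension move; the net effect is slope $+d$ with both tails unchanged. I would verify all of these as identities in the coordinate model, the only content being the min/max bookkeeping of the interval endpoints.

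The finiteness argument is then a Dickson's-lemma climb. Each of the three moves changes a tail only in steps of $d$, hence preserves its residue modulo $d$, so I would partition the positive-slope generators into the finitely many classes determined by $(\lambda\bmod d,\ \rho\bmod d)$ and, within a class, compare the integer triples $\bigl(m/d,\ (\lambda-\lambda_0)/d,\ (\rho-\rho_0)/d\bigr)\in\mathbb{N}^3$ coordinatewise. By Dickson's lemma each class has only finitely many minimal triples; choosing one generator realising each minimal triple yields finitely many \emph{seeds}, and every other generator in the class, having a coordinatewise larger triple, is reached from a seed by finitely many applications of the tail-extension and slope moves, all performed inside $\langle\text{seed},c,c^{-1}\rangle$. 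Running the symmetric argument with $c^{-1}$ for the negative-slope generators and collecting the seeds together with $a_1,\dots,a_t$ gives a finite set $F$ with $A\subseteq\langle F\rangle$, so $S=\langle F\rangle$ is finitely generated; finite presentability is then immediate from the Oliveira--Silva theorem quoted above.

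The step I expect to be the real obstacle is the boundary bookkeeping that makes the three moves legitimate and keeps them inside $S$. The idempotents manufactured from $c$ carry their own tails $-\ell_c$ and $r_c-d$, so the tail-extension and slope moves only behave as advertised once the current element already reaches at least as far as $c$ in the relevant direction; ensuring this while climbing from a possibly short seed forces a careful ordering of the moves (extend tails first, increase slope afterwards) and a verification that each intermediate product lands in the correct residue class and genuinely lies in $S$. Turning the intuitive picture ``$c$ and $c^{-1}$ let you lengthen tails and raise slope in steps of $d$'' into move-by-move identities with all preconditions met, so that the Dickson comparison exactly matches generation from finitely many seeds, is where the detailed work lies.
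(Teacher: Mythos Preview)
Your approach via Dickson's lemma and tail/slope moves is natural and genuinely different from the paper's, but the obstacle you flag at the end is a real gap rather than routine bookkeeping. The moves only behave as advertised when the current element already reaches at least as far as $c$ on the relevant side; when a short seed fails this, the idempotent $c^{-1}(bb^{-1})c$ (you dropped the trailing $c$, incidentally) has left endpoint $\ell_c-d$ rather than $\ell_b-d$, so the left-tail move overshoots and in general changes the residue class of $\lambda$ modulo $d$. No ordering of your three moves escapes this: extending one tail requires the other already to be long, and raising the slope via left-multiplication by $c$ contaminates both tails with $c$'s own parameters. With a single drift element $c$ whose tails are not minimal over $S$, a generator whose tail residues differ from those of $c$ need not be reachable from any dominated seed inside $\langle\text{seed},c\rangle$, so the Dickson comparison does not match generation.

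The paper sidesteps this with two ingredients your plan lacks. First, instead of one $c$ it uses two drift elements $\alpha=(-a_{\min},p,b_\alpha)$ and $\beta=(-a_\beta,-p,b_{\min})$ whose first and third coordinates are \emph{minimal} over all of $S$; this minimality is exactly what makes the analogues of your moves (Claims~2 and~3) avoid overshooting in one direction each, and the two are alternated depending on where the slope sits in its range. Second, and more fundamentally, the paper does not climb from short seeds at all: it first proves (Claim~1) that $E(S)$ is a finite union $\bigcup_i E(\langle u_i\rangle)$ for finitely many non-idempotents $u_i$, so the idempotent $\epsilon=(-a,0,b)$ of every $\mathscr{R}$-class meeting $S$ already lies in $\langle u_1,\dots,u_k\rangle$; the moves then only adjust the middle coordinate starting from $\epsilon$, which already has exactly the right tails. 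Elements in the finitely many small $\mathscr{D}$-classes $D_1,\dots,D_N$ (with $N=a_\beta+b_\alpha$) are simply adjoined. Your Dickson argument can likely be salvaged by absorbing these ideas---replace $c$ by $\alpha$ and $\beta$, and throw all of $S\cap\bigcup_{i\le N}D_i$ into the finite set so that every remaining target already outreaches both---but as written the climbing step does not go through.
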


For an arbitrary inverse subsemigroup of $FI_1$ we explore to what extent is its non-finite generation or presentability `caused' by its idempotents. To formalise this, we will use the following notions:

\begin{definition}
\label{def:gen_mod}
Let $S$ be an inverse semigroup, and let $T$ be an inverse subsemigroup of~$S$. 
We say that $S$ is 
\begin{itemize}[leftmargin=8mm]
\item
\textit{finitely generated modulo} $T$ if there is a finite set $A$ such that $S = \langle T, A \rangle$;
\item
\textit{finitely presented modulo} $T$ if $S$ can be defined by an inverse semigroup presentation of the form
$\Inv \langle X, Y \mid R, Q\rangle$, where $\Inv\langle X\mid R\rangle$ 
is a presentation for $T$, and $Y$ and $Q$ are finite sets.
\end{itemize}
\end{definition}

Finite generation of semigroups modulo subsemigroups has already been studied in literature, e.g.\ in \cite{ArMi07,HoRuHi98}.
Presentations for groups modulo subgroups were considered in~\cite{Pride}.

As an immediate corollary of \nameref{thm:new_S_bar} we obtain:

\begin{namedtheorem}[\nameref{cor:gen}]
Every inverse subsemigroup of the monogenic free inverse semigroup is finitely generated modulo the semilattice of its idempotents.
\end{namedtheorem}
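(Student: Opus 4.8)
The plan is to derive this corollary directly from the structural result named "$\bar{S}$ Theorem" (the second named theorem in the introduction). Let $S$ be an arbitrary inverse subsemigroup of $FI_1$. The idea is to separate the idempotent and non-idempotent generators of $S$. Write $E(S)$ for the semilattice of idempotents of $S$, and let $\bar{S}$ denote the inverse subsemigroup of $FI_1$ generated by the non-idempotent elements of $S$ (with $\bar{S}=E(S)$ taken to be, say, trivial or absorbed into $E(S)$ if $S$ has no non-idempotent elements; in that degenerate case $S=E(S)$ and the statement is vacuous since $A=\emptyset$ works). By the named theorem, $\bar{S}$ is finitely generated: say $\bar{S}=\langle A\rangle$ for some finite set $A$ of non-idempotent elements of $S$.

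Next I would check that $S=\langle E(S),A\rangle$. The inclusion $\supseteq$ is immediate since $E(S)\subseteq S$ and $A\subseteq \bar{S}\subseteq S$. For $\subseteq$, take any $s\in S$. If $s$ is idempotent, then $s\in E(S)$ and we are done; if $s$ is not idempotent, then by definition $s\in\bar{S}=\langle A\rangle\subseteq\langle E(S),A\rangle$. Hence every element of $S$ lies in $\langle E(S),A\rangle$, so $S=\langle E(S),A\rangle$ with $A$ finite, which is exactly the assertion that $S$ is finitely generated modulo $E(S)$ in the sense of the definition.

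There is essentially no obstacle here — this is a genuine "immediate corollary" as advertised in the excerpt — but there are two small points worth being careful about. First, one must make sure $\bar{S}$ really is generated by \emph{non-idempotent} elements in the precise sense required by the hypothesis of the named theorem: by construction its generating set consists of non-idempotent elements of $S$, so this is fine. Second, one should note that $\bar{S}$ is itself an inverse subsemigroup of $FI_1$ (closed under inversion, since the inverse of a non-idempotent element is again non-idempotent and lies in $S$), so the named theorem genuinely applies to it. With these checks in place the argument is complete, and I would present it in just a few lines.
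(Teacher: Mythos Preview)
Your proof is correct and follows exactly the same approach as the paper: define $\overline{S}$ as the inverse subsemigroup generated by the non-idempotents of $S$, apply Theorem~A to obtain a finite generating set $A$ for $\overline{S}$, and conclude that $S=\langle E(S),A\rangle$. The paper's version is terser (it simply writes $S=\langle E(S),\overline{S}\rangle$ and invokes Theorem~A), but your additional checks about the degenerate case and the applicability of Theorem~A are all sound.
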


Moving on to presentations, our motivating question is whether there is a `presentation version' of \nameref{cor:gen}. 
Perhaps slightly surprisingly, or disappointingly, the answer turns out to be emphatically negative:

\begin{namedtheorem}[\nameref{thm:new_no_fin}]
Let $S$ be a non-semilattice inverse subsemigroup of the monogenic free inverse semigroup. Then, $S$ is not finitely presented modulo the semilattice of its idempotents.  
\end{namedtheorem}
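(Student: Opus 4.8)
\emph{Proof idea.} I would argue by contradiction. Suppose $S=\Inv\langle X,Y\mid R,Q\rangle$ with $\Inv\langle X\mid R\rangle$ a presentation of $E(S)$ and with $Y$ and $Q$ finite, and consider the inverse semigroup $\mathcal{F}:=\Inv\langle X,Y\mid R\rangle$. Since the relations $R$ involve only the letters $X$, this $\mathcal{F}$ is precisely the free product $E(S)\ast FI_Y$ of the semilattice $E(S)$ with the free inverse semigroup $FI_Y$ on $Y$, and it carries a canonical surjection $\pi\colon\mathcal{F}\to S$ whose kernel congruence $\kappa$ is, by hypothesis, generated by the finite set $Q$. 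The aim is to derive a contradiction by showing that $\kappa$ is \emph{not} finitely generated as a congruence on $\mathcal{F}$.

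Two preliminary observations. First, since idempotents commute, the subsemigroup of an inverse semigroup generated by a set of idempotents is always a semilattice; as every letter of $X$ maps into $E(S)$ while $S$ is not a semilattice, some $y_{0}\in Y$ must map to a genuine non-idempotent $t\in S$. Second, using the standard description of elements of $FI_1$ as triples recording the minimum, endpoint and maximum of the associated walk on $\mathbb{Z}$ (so that an element is idempotent exactly when its endpoint is $0$), one checks, after replacing $t$ by $t^{-1}$ if necessary, that the idempotents $f_{n}:=t^{n}t^{-n}$ ($n\ge 1$) are pairwise distinct. Hence $E(S)$ contains the infinite set $\{f_{n}:n\ge 1\}$, with each $f_{n}$ equal in $S$ to the value of the $Y$-word $y_{0}^{\,n}y_{0}^{-n}$.

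Now exhaust $X$ by its finite subsets $F$. For each such $F$ let $E_{F}$ be the subsemilattice of $E(S)$ generated by the images of $F$ (finite, being a finitely generated semilattice), and let $\kappa_{F}$ be the congruence on $\mathcal{F}$ generated by all relations over $F\cup Y$ that hold in $S$. The family $(\kappa_{F})$ is directed, each $\kappa_{F}\subseteq\kappa$, and $\bigcup_{F}\kappa_{F}=\kappa$, since any single relation of $S$ over $X\cup Y$ involves only finitely many letters of $X$ and hence already lies in some $\kappa_{F}$. Consequently, if $\kappa$ were generated by the finite set $Q$, then $Q\subseteq\kappa_{F}$ and so $\kappa=\kappa_{F}$ for some $F$; it therefore suffices to prove $\kappa_{F}\subsetneq\kappa$ for every finite $F$. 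To see this, I would identify $\mathcal{F}/\kappa_{F}$ with the amalgamated free product $E(S)\ast_{E_{F}}\langle E_{F},Y\rangle$ of inverse semigroups, where $\langle E_{F},Y\rangle$ is the inverse subsemigroup of $S$ generated by $E_{F}$ and the images of $Y$: presenting $\langle E_{F},Y\rangle$ on the generating set $F\cup Y$, the pushout of $E(S)\hookleftarrow E_{F}\hookrightarrow\langle E_{F},Y\rangle$ is obtained from the presentation of $\mathcal{F}$ by adjoining defining relations for $\langle E_{F},Y\rangle$, which is exactly $\mathcal{F}/\kappa_{F}$. By the amalgamation property of inverse semigroups (a classical theorem of Hall), both $E(S)$ and $\langle E_{F},Y\rangle$ embed into $\mathcal{F}/\kappa_{F}$ with intersection precisely $E_{F}$. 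Pick $n$ with $f:=f_{n}\notin E_{F}$, which is possible as $E_{F}$ is finite. In $S$ the element $f$ appears both as a member of $E(S)$ and as the value of $y_{0}^{\,n}y_{0}^{-n}\in\langle E_{F},Y\rangle$; but in $\mathcal{F}/\kappa_{F}$ these two representatives lie in the two embedded factors, so they coincide only if $f\in E_{F}$ — which fails. Thus $\mathcal{F}/\kappa_{F}\to S$ is not injective, i.e.\ $\kappa_{F}\subsetneq\kappa$, and the contradiction is complete.

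The crux, I expect, is the identification of $\mathcal{F}/\kappa_{F}$ with a non-degenerate amalgam: morally, the statement that after discarding all but finitely many of the idempotent generators together with every relation mentioning them, what remains still presents only a proper quotient of $S$. This is precisely where a genuine inverse-semigroup input — the amalgamation property — is needed; an alternative would be to use the explicit triple model of $FI_1$ to construct by hand an inverse semigroup receiving compatible embeddings of $E(S)$ and $\langle E_{F},Y\rangle$ over $E_{F}$ that keeps the two copies of $f$ apart. The remaining ingredients — the structure of $FI_1$, the presence of a true non-idempotent among $Y$, and the directed-union argument — are routine.
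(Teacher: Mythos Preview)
Your argument is correct, and it is a genuinely different route from the one the paper takes.

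The paper proceeds by first invoking its own Theorem~A and Theorem~4.1 to replace the hypothetical finite presentation by a presentation of the form $\Inv\langle X_E,Y\mid R,Q,P_1\rangle$, with $Q$ a finite presentation of $\overline{S}$ and $P_1$ a \emph{finite} set of identifications $w_{e_i}=x_{e_i}$. It then picks an idempotent $g$ strictly below $f:=e_1\cdots e_k$ and lying in some $E(\langle y\rangle)$, and runs Stephen's iterative construction of the Sch\"utzenberger graph of $y^{-p}y^py^qy^{-q}$ with respect to this presentation, checking by induction on the stages that no edge label $x_h$ with $h<f$ can ever appear; hence the relation $x_g=y^{-p}y^py^qy^{-q}$, which holds in $S$, is not derivable.

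You instead perform a compactness reduction on the \emph{idempotent} generators rather than on the linking relations: exhausting $X$ by finite subsets $F$, you identify $\mathcal{F}/\kappa_F$ with the amalgamated free product $E(S)\ast_{E_F}\langle E_F,Y\rangle$, and then appeal to Hall's strong amalgamation theorem for inverse semigroups to separate the $E(S)$-copy and the $\langle E_F,Y\rangle$-copy of some $f_n\notin E_F$. This avoids both Theorem~A and Theorem~4.1 of the paper and replaces the Sch\"utzenberger-graph computation by a single structural input. The trade-off is that Hall's theorem is a substantial external result, whereas the paper's argument stays within elementary combinatorics of $FI_1$; on the other hand, your proof uses almost nothing specific to the monogenic case beyond the observation that a non-idempotent $t$ yields infinitely many distinct $t^nt^{-n}$, so it would adapt more readily to subsemigroups of higher-rank free inverse semigroups.
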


However, in proving this theorem, we are able to obtain two `nice', albeit infinite, presentations for an arbitrary subsemigroup $S$, one as an amalgamated free product (Theorem~\ref{thm:new_amlagam}) and the other in terms of conjugation action on the idempotents (Corollary~\ref{cor:new_conj}).

The paper is structured as follows. \cref{sec:prelim} introduces notation, definitions and results about inverse semigroups, the monogenic free inverse semigroup and Sch\"{u}tzenberger graphs which will be used throughout this paper. \cref{sec:gen} is concerned with generating sets of general inverse subsemigroups of the monogenic free inverse semigroup, and contains proofs of \nameref{thm:new_S_bar} and \nameref{cor:gen}. 
\cref{sec:new_present} is devoted to presentations,  including a proof of \nameref{thm:new_no_fin}. The paper concludes with a few remarks and further questions in \cref{sec:con}.

%%%%%%%%%%%%%%%%%%%%%%%%%%%%%%%%%%%%%%%%%%%%%%%%%%%%%%%%%%%%%%%%%%%%%%%%%%%%%%%%%%%%%%%%%%%%%%%%%%%%%%%%%%%%%%%%%%%%%%%%%%%%%%%%%%%%%%%%%%%%%%%%%%%%%%%%
%%%PRELIMINARIES%%%
\section{Preliminaries}
\label{sec:prelim}
\subsection{Inverse semigroups}
For a detailed introduction to inverse semigroups we refer the reader to standard monographs such as \cite{Howie,Lawson,Petrich}. Let $S$ be an inverse semigroup. 
The set of idempotents of $S$ will be denoted by $E(S)$; by definition $E(S)$ is a semilattice. 
The \emph{natural partial order} on $S$ is given by:
\[
a \leq b \text{ if there exists an idempotent $e \in S$ such that $a = eb$}.
\]

Free inverse semigroups can be defined as quotient semigroups. Let $X$ be a set, and let 
$X^{-1} := \{x^{-1} \colon x \in X \}$ be a set disjoint from $X$. 
Let $(X \cup X^{-1})^{+}$ be the free semigroup on $X \cup X^{-1}$. We define formal inverses of elements of $(X \cup X^{-1})^+$ by
\[
\begin{aligned}
    &(x^{-1})^{-1} := x \; \text{ for $x \in X $},\\
    &(x_1x_2\dots x_n)^{-1} := x_n^{-1}\dots x_2^{-1}x_1^{-1} \; \text{ where $x_i \in X \cup X^{-1}$ for all $i = 1, \dots, n$}.
\end{aligned}
\]
Let $\tau$ be the congruence on $(X \cup X^{-1})^+$ generated by 
\[
\{(uu^{-1}u, u) \colon u \in (X\cup X^{-1})^{+} \} \cup \{(uu^{-1}vv^{-1}, vv^{-1}uu^{-1}) \colon u, v \in (X \cup X^{-1})^+ \}.
\]
The quotient semigroup $(X\cup X^{-1})^+/ \tau$ is the \textit{free inverse semigroup on $X$}. 
This paper only considers the monogenic free inverse semigroup, obtained when $X=\{x\}$ is a singleton;
we denote this semigroup by $FI_1$. 

In a concrete representation, the elements of $FI_1$ can be viewed as triples:
\[
FI_1 = \{(-a, p, b) \in \mathbb{N}_0\times\mathbb{Z}\times\mathbb{N}_0 \colon  a+b>0,\ -a\leq p \leq b \}.
\]
The free generator $x$ corresponds to $(0, 1, 1)$. 
A typical triple $(-a, p, b)$ corresponds to $x^{-a}x^ax^bx^{-b}x^{p}$. 
The multiplication and inversion in $FI_1$ are as follows:
\[
\begin{aligned}
& (-a_1, p_1, b_1)(-a_2, p_2, b_2) = (-\max(a_1, a_2-p_1), p_1+p_2, \max(b_1, b_2+p_1)),\\
& (-a,p,b)^{-1}= (-(a+p), -p, b-p).
\end{aligned}
\]
For a more detailed account see  \cite[Chapter IX]{Petrich}.

We assume the reader is familiar with Green's equivalences, particularly $\mathscr{R}$ and $\mathscr{D}$; for an introduction see \cite{Clifford_Preston,Howie}.
In $FI_1$ the equivalences $\mathscr{R}$ and $\mathscr{D}$ are given as follows
\[
\begin{aligned}
&(-a, p, b)\mathscr{R} (-a', p', b') \Leftrightarrow a=a' \text{ and } b=b',\\
&(-a, p, b)\mathscr{D} (-a', p', b') \Leftrightarrow a+b=a'+b'.
\end{aligned}
\]
We denote the $\mathscr{R}$- and $\mathscr{D}$-class of $(-a, p, b)$ by $R_{a,b}$ and $D_{a+b}$, respectively.
In particular, the $\mathscr{D}$-classes of $FI_1$ are indexed by natural numbers.

The idempotents of $FI_1$ are precisely the triples of the form $(-a,0,b)$. We will denote the semilattice of idempotents by $E(FI_1)$, or simply $E$.
The product (meet) of two idempotents is $(-a,0,b)(-c,0,d)=(-\max(a,c),0,\max(b,d))$.
As a partially ordered set, $E$ is isomorphic to $\mathbb{N}_0\times\mathbb{N}_0\setminus\{(0,0)\}$, and
we will visualise it as shown in
\cref{fig:new_diag_1}. 

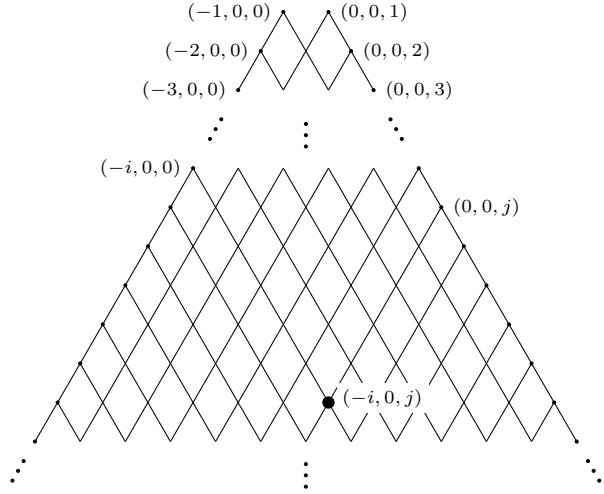
\begin{figure}
\begin{center}
%%%% diagram with e_i and f_j
\begin{tikzpicture}[scale=0.6]
%\draw[step=1cm,gray,very thin] (-8,-8) grid (8,8);
\node [draw, shape = circle, fill = black, label=right:{\tiny $(0,0,1)$}, inner sep=-2, scale=0.2] (f1) at (0.5,8) {};
\node[draw, shape=circle, fill=black, label=left:{\tiny $(-1,0,0)$}, inner sep=-2, scale=0.2] (e1) at (-0.5,8) {}; 
\foreach \i in {2, ..., 3}{
    \node[draw, shape=circle, fill=black, label=right:{\tiny $(0,0,\i)$}, inner sep=-2, scale=0.2] (f\i) at ($(0.5,8)+(-60:\i-1)$) {};
    \node[draw, shape=circle, fill=black, label=left:{\tiny $(-\i, 0, 0)$}, inner sep=-2, scale=0.2] (e\i) at ($(-0.5,8)+(-120:\i-1)$) {};
}

\draw (f1)--(f3);
\draw (e1)--(e3);

\foreach \i in {1, ..., 3} {
    \draw ($(f1)+(-60:\i-1)$) -- ($(e3)+(0:\i)$);
    \draw ($(e1)+(-120:\i-1)$) -- ($(f3)-(0:\i)$);
}

\node [draw, shape = circle, fill = black, inner sep=-2, scale=0.2] at ($(e3)+(-120:0.75)$) {};
\node [draw, shape = circle, fill = black, inner sep=-2, scale=0.2] at ($(e3)+(-120:1)$) {};
\node [draw, shape = circle, fill = black, inner sep=-2, scale=0.2] at ($(e3)+(-120:1.25)$) {};

\node [draw, shape = circle, fill = black, inner sep=-2, scale=0.2] at ($(e3)!0.5!(f3) + (-90:0.75)$) {};
\node [draw, shape = circle, fill = black, inner sep=-2, scale=0.2] at ($(e3)!0.5!(f3) + (-90:1)$) {};
\node [draw, shape = circle, fill = black, inner sep=-2, scale=0.2] at ($(e3)!0.5!(f3) + (-90:1.25)$) {};

\node [draw, shape = circle, fill = black, inner sep=-2, scale=0.2] at ($(f3)+(-60:0.75)$) {};
\node [draw, shape = circle, fill = black, inner sep=-2, scale=0.2] at ($(f3)+(-60:1)$) {};
\node [draw, shape = circle, fill = black, inner sep=-2, scale=0.2] at ($(f3)+(-60:1.25)$) {};

\node [draw, shape = circle, fill = black, label = left:{\tiny $(-i,0,0)$}, inner sep=-2, scale=0.2] (ei1) at ($(e1)+(-120:4)$) {};
\node [draw, shape = circle, fill=black, inner sep=-2, scale = 0.2] (ei2) at ($(ei1)+(-120:1)$) {};
\node [draw, shape = circle, fill=black, inner sep=-2, scale = 0.2] (ei3) at ($(ei2)+(-120:1)$) {};
\node [draw, shape = circle, fill=black, inner sep=-2, scale = 0.2] (ei4) at ($(ei3)+(-120:1)$) {};
\node [draw, shape = circle, fill=black, inner sep=-2, scale = 0.2] (ei5) at ($(ei4)+(-120:1)$) {};
\node [draw, shape = circle, fill=black, inner sep=-2, scale = 0.2] (ei6) at ($(ei5)+(-120:1)$) {};
\node [draw, shape = circle, fill=black, inner sep=-2, scale = 0.2] (ei7) at ($(ei6)+(-120:1)$) {};
\node [draw, shape = circle, fill=black, inner sep=-2, scale=0.2] (ei8) at ($(ei7)+(-120:1)$) {};

\draw (ei1)--(ei8);

\node [draw, shape = circle, fill = black, inner sep=-2, scale=0.2] (fi1) at ($(f1)+(-60:4)$) {};
\node [draw, shape = circle, fill = black, label=right:{\tiny $(0,0,j)$}, inner sep=-2, scale=0.2] (fi2) at ($(fi1)+(-60:1)$) {};
\node [draw, shape = circle, fill = black, inner sep=-2, scale=0.2] (fi3) at ($(fi2)+(-60:1)$) {};

\foreach \i in {4, ...,8} {
    \node[draw, shape = circle, fill=black, inner sep=-2, scale = 0.2] (fi\i) at ($(fi1)+(-60:\i-1)$) {};
}
\draw (fi1)--(fi8);

\draw (ei1) -- ($(ei8)+(0:7)$);
\draw (ei2) -- ($(ei8)+(0:6)$);
\draw (ei3) -- ($(ei8)+(0:5)$);
\draw (ei4) -- ($(ei8)+(0:4)$);
\draw (ei5) -- ($(ei8)+(0:3)$);
\draw (ei6) -- ($(ei8)+(0:2)$);
\draw (ei7) -- ($(ei8)+(0:1)$);

\draw (fi1) -- ($(fi1)+(-120:7)$);
\draw (fi2) -- ($(fi2)+(-120:6)$);
\draw (fi3) -- ($(fi3)+(-120:5)$);
\draw (fi4) -- ($(fi4)+(-120:4)$);
\draw (fi5) -- ($(fi5)+(-120:3)$);
\draw (fi6) -- ($(fi6)+(-120:2)$);
\draw (fi7) -- ($(fi7)+(-120:1)$);

\foreach \i in {1, ...,4} {
    \draw ($(ei1)+(0:\i)$) -- ($(ei1)+(0:\i)+(-120:7)$);
    \draw ($(fi1)-(0:\i)$) -- ($(fi1)-(0:\i)+(-60:7)$);
}
\node [draw, shape = circle, fill = black, inner sep=-2.5, scale = 0.6] (eifj) at ($(ei1)+(-60:6)$) {};
\node [black, fill=white, inner sep=2pt] at ($(eifj) + (12mm,1mm)$) {\tiny $(-i,0,j)$};

\node [draw, shape = circle, fill = black, inner sep=-2, scale=0.2] at ($(ei8)+(-120:0.5)$) {};
\node [draw, shape = circle, fill = black, inner sep=-2, scale=0.2] at ($(ei8)+(-120:0.75)$) {};
\node [draw, shape = circle, fill = black, inner sep=-2, scale=0.2] at ($(ei8)+(-120:1)$) {};

\node [draw, shape = circle, fill = black, inner sep=-2, scale=0.2] at ($(ei8)!0.5!(fi8) + (-90:0.5)$) {};
\node [draw, shape = circle, fill = black, inner sep=-2, scale=0.2] at ($(ei8)!0.5!(fi8) + (-90:0.75)$) {};
\node [draw, shape = circle, fill = black, inner sep=-2, scale=0.2] at ($(ei8)!0.5!(fi8) + (-90:1)$) {};

\node [draw, shape = circle, fill = black, inner sep=-2, scale=0.2] at ($(fi8)+(-60:0.5)$) {};
\node [draw, shape = circle, fill = black, inner sep=-2, scale=0.2] at ($(fi8)+(-60:0.75)$) {};
\node [draw, shape = circle, fill = black, inner sep=-2, scale=0.2] at ($(fi8)+(-60:1)$) {};

\end{tikzpicture}
\\
\caption{Semilattice of idempotents of $FI_1$.}
\label{fig:new_diag_1}
\end{center}

\end{figure}

In this paper, we will use the term \emph{subsemigroup} to mean \emph{inverse subsemigroup}, i.e. a subset which is closed under both products and inversion, unless we explicitly say otherwise.
Thus, for an inverse semigroup $S$ and a set $X\subseteq S$, we write $\langle X\rangle$ to denote the inverse subsemigroup of $S$ generated by $X$.

The \textit{inverse semigroup presentation} $\Inv \langle X \mid R \rangle$
consists of a set  $X$, and a set of pairs of words $R\subseteq (X\cup X^{-1})^+\times (X\cup X^{-1})^+$.
It defines the inverse semigroup $S:=(X \cup X^{-1})^+/ \sigma$ where $\sigma$ is the congruence on $(X \cup X^{-1})^+$ generated by 
the congruence 
$\tau \cup R$ introduced earlier. 
For $w_1, w_2 \in (X\cup X^{-1})^+$ we will write $w_1 \equiv w_2$ to mean $w_1$ and $w_2$ are identical, and $w_1 = w_2$ to mean $w_1$ and $w_2$ are equal in $S$, i.e. $(w_1, w_2)\in \sigma$.

\subsection{Sch\"{u}tzenberger graphs}
By a \emph{graph} $\Gamma$ we will mean a directed labelled graph, with labels coming from $X\cup X^{-1}$ for some set $X$.
A \emph{birooted} graph is a triple $(\alpha, \Gamma, \beta)$, where $\Gamma$ is a graph, and
$\alpha$, $\beta$ are two chosen vertices. 
An edge from $u$ to $v$ with label $x$ will be written as $(u,x,v)$, and graphically represented as
$u \xrightarrow{\tiny\makebox[0.43cm]{$x$}} v$.

We say that $\Gamma$ is an \textit{inverse graph} if $u \xrightarrow{\tiny\makebox[0.43cm]{$x$}} v $ is an edge if and only if $u \xleftarrow{\scalebox{0.75}{$x^{-1}$}} v$ is an edge.
We then draw only one edge labelled by $x \in X \cup X^{-1}$ to represent both. 

An inverse graph $\Gamma$ is called \textit{deterministic} if it does not contain edges $(u, x, v)$ and $(u, x, v')$ with $v \neq v'$.

The \textit{language} of $(\alpha, \Gamma, \beta)$ is 
\[
\Language(\Gamma) := \{ w \in (X\cup X^{-1})^{\ast} \colon \text{$w$ labels a walk from $\alpha$ to $\beta$} \}
\]
where $(X \cup X^{-1})^{\ast}$ is the free monoid on $X \cup X^{-1}$.

Let $S$ be an inverse semigroup generated by $X$.
%and let $w$ be written as a product of elements of $X\cup X^{-1}$. 
The \textit{Sch\"{u}tzenberger graph} $\SGamma(w)$ of $w$ with respect to $X$ is the birooted graph $(ww^{-1}, \Gamma, w)$, where $\Gamma$ is the subgraph of the right Cayley graph of $S$ with respect to $X$, induced on the vertex set $\textsf{V}:= R_{w}$, the $\mathscr{R}$-class of $w$. Thus, the edges of $\SGamma(w)$ are 
\begin{equation*}
\textsf{E}(\SGamma(w)) = \{(u, x, v)\in \textsf{V} \times (X \cup X^{-1}) \times \textsf{V} \colon ux = v \}
\end{equation*}
The language of $\SGamma(w)$ consists of the empty word and all $u \in (X\cup X^{-1})^{\ast}$ representing elements $\geq w$ in the natural partial order on $S$.

\begin{theorem}[{\cite[Theorems 3.1, 3.9]{Stephen1990}}]
\label{thm:stephen}
Let  $S$ and  $X$ be as above, and let $\sigma$ be the kernel of the natural epimorphism $(X\cup X^{-1})^{+}\rightarrow S$. For any  $w,v\in (X\cup X^{-1})^+$, we have:
\begin{enumerate}[leftmargin=8mm,itemsep=1mm,label=\textup{(\roman*)}]
    \item $\SGamma(w\sigma)$  is inverse and deterministic;
    \item $w\sigma=v\sigma$ if and only if $w \in \textit{L}(\SGamma(v\sigma))$ and $v \in \textit{L} (\SGamma(w\sigma))$.
\end{enumerate}

\end{theorem}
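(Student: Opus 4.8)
The plan is to derive both parts from the description of $\Language(\SGamma(w\sigma))$ recorded just above the theorem --- namely that it consists of the empty word together with all words representing elements $\geq w\sigma$ in the natural partial order --- which I would establish first as a lemma; granted this, (i) and (ii) are short. Everything happens inside the right Cayley graph of $S$, of which $\SGamma(w\sigma)$ is by definition the full subgraph on the $\mathscr{R}$-class $R_{w\sigma}$, so the only tools required are elementary inverse-semigroup identities: that $E(S)$ is a commutative subsemilattice, that $a\,\mathscr{R}\,b$ iff $aa^{-1}=bb^{-1}$, and that reading a word $u$ from a vertex $g$ of the Cayley graph reaches the vertex $gu$.

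For (i), \emph{determinism} is immediate, since in the right Cayley graph the unique edge out of a vertex $u$ labelled by a fixed $x\in X\cup X^{-1}$ ends at $ux$, and restricting to an induced subgraph cannot create a second such edge. For the \emph{inverse} property, suppose $(u,x,v)$ is an edge of $\SGamma(w\sigma)$, so $ux=v$ with $u,v\in R_{w\sigma}$; then $uu^{-1}=vv^{-1}=uxx^{-1}u^{-1}$, and multiplying on the right by $u$ and using that the idempotents $xx^{-1}$ and $u^{-1}u$ commute gives $u=uxx^{-1}$. Hence $vx^{-1}=uxx^{-1}=u\in R_{w\sigma}$, so $(v,x^{-1},u)$ is again an edge; the reverse implication is the same computation with $x$ replaced by $x^{-1}$.

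The lemma on $\Language(\SGamma(w\sigma))$ is the heart of the matter. For a nonempty word $u=x_1\cdots x_n$: if $u$ labels a walk of $\SGamma(w\sigma)$ from $(ww^{-1})\sigma$ to $w\sigma$, then that walk ends at $(ww^{-1}u)\sigma$, so $ww^{-1}u=w$ and therefore $w\sigma\leq u\sigma$. Conversely, if $w\sigma\leq u\sigma$, say $w=eu$ with $e\in E(S)$, then $ww^{-1}u=w$ as well (one may take the idempotent to be $ww^{-1}$), so the $u$-labelled walk from $(ww^{-1})\sigma$ in the Cayley graph does end at $w\sigma$; the remaining --- and, I expect, trickiest --- point is that every intermediate vertex $(ww^{-1}x_1\cdots x_i)\sigma$ lies in $R_{w\sigma}$, i.e.\ that the idempotent $e_i:=ww^{-1}x_1\cdots x_i x_i^{-1}\cdots x_1^{-1}ww^{-1}$ equals $ww^{-1}$. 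I would obtain $e_i\leq ww^{-1}$ from the general identity $fgf\leq f$ for an idempotent $f$, and $ww^{-1}\leq e_i$ by writing $u=pq$ with $p=x_1\cdots x_i$, observing that $ww^{-1}=ww^{-1}uu^{-1}$ and $uu^{-1}=p(qq^{-1})p^{-1}\leq pp^{-1}$, and multiplying this last inequality by $ww^{-1}$ in the semilattice $E(S)$; together these force $e_i=ww^{-1}$. (The empty word belongs to $\Language(\SGamma(w\sigma))$ precisely when $(ww^{-1})\sigma=w\sigma$, i.e.\ when $w\sigma$ is idempotent, which is the degenerate case.)

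Given the lemma, (ii) is purely formal. If $w\sigma=v\sigma$, then $\SGamma(w\sigma)=\SGamma(v\sigma)$, and since each of $w,v$ represents an element $\geq$ itself, the lemma yields $w\in\Language(\SGamma(v\sigma))$ and $v\in\Language(\SGamma(w\sigma))$. Conversely, $w\in\Language(\SGamma(v\sigma))$ gives $v\sigma\leq w\sigma$ and $v\in\Language(\SGamma(w\sigma))$ gives $w\sigma\leq v\sigma$, so $w\sigma=v\sigma$ by antisymmetry of the natural partial order. The main obstacle is thus the $\mathscr{R}$-class bookkeeping in the converse direction of the lemma; everything else is manipulation of idempotents.
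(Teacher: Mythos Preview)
The paper does not give its own proof of this theorem: it is quoted from \cite{Stephen1990} as background material in the preliminaries, with no accompanying argument. So there is no ``paper's proof'' to compare against.

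That said, your argument is correct and self-contained. The verification of (i) is the standard one, and your derivation of $u=uxx^{-1}$ from $uu^{-1}=uxx^{-1}u^{-1}$ via commutativity of idempotents is clean. For the key lemma, the delicate step---showing that each intermediate vertex $ww^{-1}x_1\cdots x_i$ stays in $R_{w\sigma}$---is handled correctly: from $ww^{-1}u=w$ you get $ww^{-1}uu^{-1}=ww^{-1}$, and since $uu^{-1}=pqq^{-1}p^{-1}\leq pp^{-1}$ in the semilattice of idempotents, multiplying by $ww^{-1}$ gives $ww^{-1}\leq ww^{-1}pp^{-1}$, while the reverse inequality is immediate. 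Part (ii) then follows formally from the lemma and antisymmetry, exactly as you say.

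One small point: your parenthetical on the empty word is more accurate than the paper's informal description of $\Language(\SGamma(w))$, which asserts that the empty word always belongs to the language; in fact it does so only when $w\sigma$ is idempotent, as you note.
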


When $S$ is defined by a presentation $\Inv\langle X\mid R \rangle$, there is an iterative process for computing Sch\"{u}tzenberger graphs.
Let $w = x_1x_2\dots x_n \in (X \cup X^{-1})^+$. 
The \textit{linear graph} of $w$, $(\alpha_w, \Lin_w, \beta_w)$, is an inverse, birooted and directed graph with labels $x_1, x_2, \dots, x_n \in X\cup X^{-1}$. The vertices of $\Lin_w$ are $\alpha_w = \gamma_0, \gamma_1, \dots, \gamma_{n-1}, \gamma_n = \beta_w$ and the edges are $(\gamma_{i-1}, x_i, \gamma_i)$ for $i = 1, \dots, n$ as the following shows: 
\begin{figure}[H]\centering
\begin{tikzpicture}
\node (1) {$\gamma_0$};
\node (2) [right = 0.8cm of 1] {$\gamma_1$};
\node (3) [right = 0.8cm of 2] {$\gamma_2$};

\node (4) [right = 3.2cm of 3] {$\gamma_n$.};
\node (n-1) [left=0.8cm of 4] {$\gamma_{n-1}$};
\node at ($(3)!.3!(4)$) {\ldots};

\draw[->] (1) -- node[midway, above] {\small$x_1$} (2);
\draw[->] (2) -- node[midway, above] {\small$x_2$} (3);
\draw[->] (n-1) -- node[midway, above] {\small$x_n$} (4);

\end{tikzpicture}
\end{figure}
We define two operations on graphs called determinations and expansions.
\begin{description}[style=unboxed,leftmargin=0cm]
\item[Determination] If $\Gamma$ has  edges $(a, x, b)$ and $(a, x, c)$, then we identify two vertices $b$ with $c$ and identify the edge from $a$ to $b$ with the edge from $a$ to $c$. We call this operation a \textit{determination}. If $\Gamma'$ is a deterministic graph obtained from $\Gamma$ by a sequence of determinations, then we say that $\Gamma'$ is obtained from $\Gamma$ by a \textit{complete determination}. 
\item[Expansion] \begin{sloppypar}
Suppose $a, b$ are vertices of $\Gamma$ and $(r, s)$ or $(s, r)$ is in $R$. Suppose there is a walk from $a$ to $b$ with a label $r$, but no walk from $a$ to $b$ with a label $s = y_1\dots y_m \in (X \cup X^{-1})^+$. We add new vertices $c_1, \dots, c_{m-1}$ and new edges $(a, y_1, c_1), (c_1, y_2, c_2), \dots, (c_{m-1}, y_m,b)$. We call this operation an \textit{expansion}. If we perform an expansion for each $(r, s)$ or $(s, r)$ in $R$ which satisfies the assumption, then the resulting graph $\Gamma'$ is said to be obtained from $\Gamma$ by a \textit{complete expansion}.\end{sloppypar}
\end{description}

We define the \textit{Stephen's sequence} for $w \in (X\cup X^{-1})^+$ with respect to $\Inv\langle X \mid R \rangle$
is the sequence of graphs $\Gamma_k(w)$, $k\in\mathbb{N}_0$, starting at $(\alpha_w, \Lin_w, \beta_w)$, and then successively and alternately applying complete determinations and expansions.
Then, the Sch\"{u}tzenberger graph of $w$  is isomorphic to the limit of the sequence $\Gamma_k(w)$ (see \cite{Stephen1990}).

%%%%%%%%%%%%%%%% Generating sets and presentations%%%%%%%%%%%%%%%%%%%%%%%%%%%%%%%%%%%%%%%%%%%%%%%
\section{Generating sets}
\label{sec:gen}
%\Ccomm{I have changed Introduction in 'As we mentioned in the Introduction, ...' as Introduction with hyperlink}

As we mentioned in the \nameref{sec:intro}, 
the monogenic free inverse semigroup $FI_1$ contains inverse subsemigroups that are not finitely generated. 
The most obvious example is the (infinite) semilattice of idempotents $E(FI_1)$. 
We also give a non-semilattice example:

\begin{example}
Let $S$ be the inverse subsemigroup of $FI_1$ generated by $\{(-n, 0, 0) \colon n \in \mathbb{N} \}$ and $u := (-1, 2, 3)$. 
Note that the idempotents  $\{(-n, 0, 0) \colon n \in \mathbb{N} \}$ appear on the left edge of the diagram shown in Figure \ref{fig:new_diag_1}.
Since the third component of $u$ is $3>0$, it follows that no product involving $u$ or $u^{-1}$ can be equal to any
$(-n, 0, 0) $. Therefore, $S$ is indeed not finitely generated.
\end{example}

Our main aim in this section is to show that every inverse subsemigroup of $FI_1$ is finitely generated modulo its semilattice of idempotents.
The main step in doing this is
to consider an inverse subsemigroup of $FI_1$ which is generated by non-idempotent elements. 
Our first goal will be to show finite generation and presentability for such an inverse subsemigroup. We start this by introducing some results from \cite{Oliveira2005}. Let $S$ be an inverse subsemigroup of $FI_1$ generated by non-idempotents. Define
\[
\begin{aligned}
    a_{\text{min}} &:= \min\{a\in \mathbb{N}_0 \colon (-a, x, b) \in S \text{ for some } x \in \mathbb{Z}, b \in \mathbb{N}_0 \},\\
    b_{\text{min}} &:= \min\{b\in \mathbb{N}_0 \colon  (-a, x, b) \in S \text{ for some } x \in \mathbb{Z}, a \in \mathbb{N}_0 \}.
\end{aligned}
\]
In other words, $a_{\text{min}}$ and $b_{\text{min}}$ are the minimum first and last components of elements in~$S$, respectively.

\begin{lemma}[\cite{Oliveira2005}]
\label{lem:O&S}
With the above notation,  the following hold.
\begin{enumerate}[leftmargin=9mm,itemsep=1mm,label=\textup{(\roman*)}]
\item \label{O&S i} There exists $p \in \mathbb{N}_0$ such that any element $u \in S$ is of the form $(-a, qp, b)$ where $q \in \mathbb{Z}$.
\item \label{O&S ii} There exist $\alpha = (-a_{\emph{min}}, p, b_{\alpha})$ and $\beta = (-a_{\beta}, -p, b_{\emph{min}})$ in $S$.
\item \label{O&S iii} Any element $u \in S$ is of the form $(-(a_{\emph{min}}+n), qp, b_{\emph{min}}+m)$ and $-n \leq qp \leq m$.
\item \label{O&S iv} Further to \ref{O&S iii}, suppose $n \equiv r \pmod p$ and $m \equiv s \pmod p$ where $0 \leq r,s < p$. So, there exist $x, y \in \mathbb{N}_0$ such that $u = (-(a_{\emph{min}}+xp+r), qp, b_{\emph{min}}+yp+s)$. Then, $-x \leq q \leq y$.
\end{enumerate}
\end{lemma}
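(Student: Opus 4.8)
The plan is to settle (i) at once via a homomorphism onto $\mathbb Z$, to treat (ii) as the substantive part, and then to read (iii) and (iv) off from (i) together with closure under inversion. For (i): the projection $\pi\colon FI_1\to\mathbb Z$, $(-a,p,b)\mapsto p$, is a semigroup homomorphism by the stated multiplication formula, and $\pi((-a,p,b)^{-1})=-p$; hence $\pi(S)$ is a subsemigroup of $\mathbb Z$ closed under negation and containing $0$ (for instance $\pi(uu^{-1})=0$), so $\pi(S)=p\mathbb Z$ for some $p\in\mathbb N_0$. Since $S$ is generated by non-idempotents and an element of $FI_1$ is idempotent exactly when its middle component vanishes, in fact $p\ge 1$. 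Thus every $u\in S$ has the form $(-a,qp,b)$ with $q\in\mathbb Z$.

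For (ii) I would first record two easy facts. First, for a product in $FI_1$ one has $\max(a_1,a_2-p_1)\ge a_1$, so by induction the $a$ of $h_1h_2\cdots h_n$ is at least the $a$ of $h_1$; writing an arbitrary element of $S$ as a product of non-idempotent generators and their inverses, this forces $a_{\min}$ to equal the least $a$ occurring among the generators and their inverses, so $a_{\min}$ is attained by some $h\in S$ which is a generator or the inverse of one — hence $h$ is \emph{non-idempotent}. Second, if $(-a_{\min},p',b')\in S$ then its inverse $(-(a_{\min}+p'),-p',b'-p')$ also lies in $S$, so minimality of $a_{\min}$ forces $p'\ge 0$; applied to $h$ this gives $h=(-a_{\min},k_0p,e)$ with $k_0\ge 1$, and iterating the multiplication formula gives $h^n=(-a_{\min},nk_0p,e+(n-1)k_0p)$ for all $n\ge 1$. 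Now choose any $z=(-g,p,h_0)\in S$ (available since $p\in\pi(S)$), pick $n$ with $nk_0p\ge g-a_{\min}$ (possible as $g\ge a_{\min}$), and set $\alpha:=h^nz(h^n)^{-1}$; a direct computation with the multiplication formula — the content being that within the long left-excursion of $h^n$ the factor $z$ never reaches further left than $-a_{\min}$ — yields $\alpha=(-a_{\min},p,b_\alpha)$. To produce $\beta$, apply the involutive automorphism $\phi$ of $FI_1$ given by $\phi(-a,p,b)=(-b,-p,a)$ (a homomorphism by a routine check against the multiplication formula, bijective since $\phi^2=\mathrm{id}$): $\phi(S)$ is an inverse subsemigroup generated by non-idempotents with $a_{\min}(\phi(S))=b_{\min}(S)$ and the same period $p$, so the previous construction gives $\alpha'=(-b_{\min},p,b_{\alpha'})\in\phi(S)$, and $\beta:=\phi(\alpha')=(-b_{\alpha'},-p,b_{\min})\in S$ has the required shape.

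Parts (iii) and (iv) are then bookkeeping. Given $u\in S$, (i) lets us write $u=(-(a_{\min}+n),qp,b_{\min}+m)$ with $n,m\ge 0$; since $u^{-1}=(-((a_{\min}+n)+qp),-qp,(b_{\min}+m)-qp)$ belongs to $S$, minimality of $a_{\min}$ and $b_{\min}$ gives $n+qp\ge 0$ and $m-qp\ge 0$, i.e.\ $-n\le qp\le m$, which is (iii). For (iv), writing $n=xp+r$ and $m=yp+s$ with $0\le r,s<p$, (iii) gives $qp\ge-(xp+r)$ and $qp\le yp+s$, hence $q\ge -x-r/p>-x-1$ and $q\le y+s/p<y+1$, so $-x\le q\le y$ by integrality of $q$.

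The only real obstacle is (ii). The naive attempt — take a witness for $a_{\min}$ and multiply by a period element to force the middle component to be $p$ — fails because it tends to push the $a$-value up to about $g-p$. The fix is to first pass to a sufficiently high power $h^n$, so that the period element conjugated ``inside'' $h^n$ stays within the left range already swept out; one must also not overlook the small but essential point that $a_{\min}$ is realised by a non-idempotent element, which is exactly what makes $h$ and its powers available. Everything else reduces to careful use of the multiplication formula.
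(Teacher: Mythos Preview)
The paper does not supply its own proof of this lemma: it is quoted from \cite{Oliveira2005}, with the subsequent Remark observing that the arguments there do not actually use finite generation. So there is no in-paper proof to compare against; what you have written is a correct, self-contained justification.

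Your argument is sound throughout. For (i) the homomorphism $\pi$ to $\mathbb Z$ is the natural device. For (ii) your two observations---that the minimum $a$ is already realised by a generator (hence a non-idempotent), and that any element with first coordinate $-a_{\min}$ must have non-negative middle coordinate---set up the key construction $\alpha=h^n z(h^n)^{-1}$ cleanly; the computation you sketch does go through, and the use of the flip automorphism $\phi(-a,p,b)=(-b,-p,a)$ to obtain $\beta$ from the $\alpha$-construction applied to $\phi(S)$ is an elegant way to avoid repeating the work. Parts (iii) and (iv) are, as you say, bookkeeping from closure under inversion and integrality.

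One cosmetic point: in (ii) the parenthetical ``possible as $g\ge a_{\min}$'' is not the reason one can choose $n$ with $nk_0p\ge g-a_{\min}$; that is possible simply because $k_0p\ge 1$. The inequality $g\ge a_{\min}$ is true but irrelevant there.
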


\begin{remark}
\begin{enumerate}[leftmargin=0.1mm,itemindent=0.8cm,labelwidth=\itemindent,labelsep=0cm,align=left,label={(\arabic*)}]
\item 
The definitions and notations of $S$, $a_{\text{min}}$ and $b_{\text{min}}$ are slightly different in \cite{Oliveira2005}. The inverse subsemigroups in \cite{Oliveira2005} are finitely generated and generating sets may contain idempotent elements of $FI_1$. Nonetheless, $a_{\text{min}}$ and $b_{\text{min}}$ in \cite{Oliveira2005} are the minimum first and last components of non-idempotent elements of a finitely generated inverse subsemigroup of $FI_1$. Removing `finitely generated' in the definitions in \cite{Oliveira2005} gives our definitions and this does not change the value of $a_{\text{min}}$ and $b_{\text{min}}$.
\item 
In \cite{Oliveira2005} it is assumed that $S$ is a finitely generated inverse subsemigroup of $FI_1$ for the proofs of \ref{O&S i}-\ref{O&S iv} of \cref{lem:O&S}. However, the proofs do not use the assumption. Hence, \ref{O&S i}-\ref{O&S iv} hold for a general inverse subsemigroup of $FI_1$ and, in particular, for an inverse subsemigroup of $FI_1$ generated by non-idempotent elements.
\end{enumerate}
\end{remark}

Recall from \cref{sec:prelim} that the idempotents of $FI_1$ are the elements 
$(-a,0,b)$, with $(a,b)\in(\mathbb{N}_0\times\mathbb{N}_0)\setminus\{(0,0)\}$, and that the set of idempotents 
can be identified with $(\mathbb{N}_0\times\mathbb{N}_0)\setminus\{(0,0)\}$.
We say that a subset $A\subseteq \mathbb{N}_0\times \mathbb{N}_0$ is \textit{periodic} if there exists $p\in \mathbb{N}$ such that 
$(a, b)\in A$ implies $(a+p, b), (a, b+p)\in A$. We call the natural number $p$ a \textit{periodicity} of $A$. We sometimes say $A$ is a \textit{$p$-periodic} set.
Certainly, if $A$ is $p$-periodic, then it is also $np$-periodic for all $n \in \mathbb{N}$.

 \citet{Reilly1972} showed that every monogenic subsemigroup $\langle u\rangle$ of $FI_1$, where $u$ is a non-idempotent element of $FI_1$, is isomorphic to $FI_1$. 
 In particular, the semilattice $E(\langle u \rangle)$ is isomorphic to $E(FI_1)$. 
 In fact, if $u = (-a, p, b)$, the semilattice $E(\langle u \rangle)$ is a $p$-periodic subset of  $FI_1$.
 An example illustrating this is shown in
 \cref{fig:semilattice_general}; here $u = (-a, 3, b)$ and $E(\langle u\rangle)$ is a $3$-periodic subset of $E(FI_1)$.

%centered mid_3
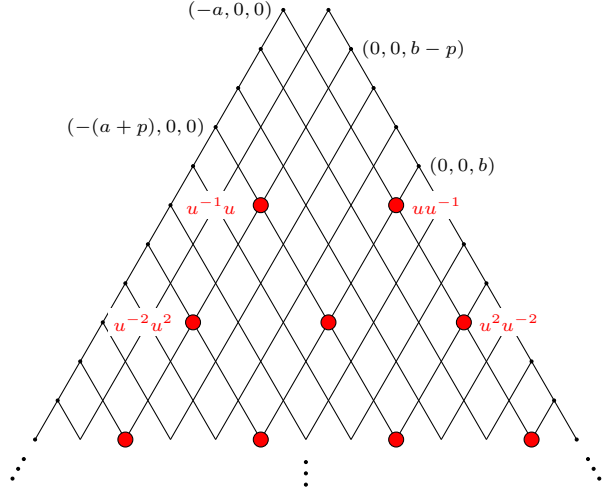
\begin{figure}\centering
\begin{tikzpicture}[scale=0.6]
%\draw[step=1cm,gray,very thin] (-8,-8) grid (8,8);
\node [draw, shape = circle, fill = black, inner sep=-2, scale=0.2] (f1) at (0.5,8) {};
\node[draw, shape=circle, fill=black, inner sep=-2, scale=0.2] (e1) at (-0.5,8) {}; 
\foreach \i in {2, ..., 12}{
    \node[draw, shape=circle, fill=black, inner sep=-2, scale=0.2] (f\i) at ($(f1)+(-60:\i-1)$) {};
    \node[draw, shape=circle, fill=black, inner sep=-2, scale=0.2] (e\i) at ($(e1)+(-120:\i-1)$) {};
}

\draw (f1)--(f12);
\draw (e1)--(e12);

\foreach \i in {1, ..., 11} {
    \draw (f\i) -- ($(e1)+(-120:11)+(0:\i)$);
    \draw (e\i) -- ($(f1)+(-60:11)-(0:\i)$);
}

\node[draw, shape = circle, fill=black, label=left:{\tiny $(-a,0,0)$}, inner sep=-1, scale=0.08] (ea) at (e1) {};
\node[draw, shape = circle, fill=black, label=left:{\tiny $(-(a+p),0,0)$}, inner sep=-1, scale=0.08] (eap) at (e4) {};
\node[draw, shape = circle, fill=black, label=right:{\tiny $(0,0,b-p)$}, inner sep=-1, scale=0.08] (fbp) at (f2) {};
\node[draw, shape = circle, fill=black, label=right:{\tiny $(0,0,b)$}, inner sep=-1, scale=0.08] (fb) at (f5) {};

\node [draw, shape = circle, fill = red, inner sep=-2] (m1) at ($(f6)-(0:1)$) {};
\node [red, fill=white,inner sep=2pt] at ($(m1)+(9mm,0mm)$) {\tiny $uu^{-1}$};

\node [draw, shape = circle, fill = red, inner sep=-2] (m'1) at ($(e6)+(0:2)$) {};
\node [red, fill=white, inner sep=2pt] at ($(m'1)+(-11mm,0mm)$) {\tiny $u^{-1}u$};

\node [draw, shape = circle, fill = red, inner sep=-2] (m2) at ($(f9)-(0:1)$) {};
\node [red, fill=white,inner sep=2pt] at ($(m2)+(10mm,0mm)$) {\tiny $u^2u^{-2}$};

\node [draw, shape = circle, fill = red, inner sep=-2] (m'2) at ($(e9)+(0:2)$) {};
\node [red, fill=white, inner sep=2pt] at ($(m'2)+(-11mm,0mm)$) {\tiny $u^{-2}u^2$};

\node [draw, shape = circle, fill = red, inner sep=-2] at ($(f9)-(0:4)$) {};

%\node [draw, shape = circle, fill=blue, inner sep=-2.5, label=left:$e$] at ($(e7)$) {};
%\node [draw, shape = circle, fill=blue, inner sep=-2.5] (new) at ($(e7)+(-60:2)$) {};
%\node [blue, fill=white, inner sep=0pt] at ($(new)+(180:0.8)$) {\small $u^{-1}ue$};

\node [draw, shape = circle, fill = red, inner sep=-2] at ($(f12)-(0:10)$) {};
\node [draw, shape = circle, fill = red, inner sep=-2] at ($(f12)-(0:7)$) {};
\node [draw, shape = circle, fill = red, inner sep=-2] at ($(f12)-(0:4)$) {};
\node [draw, shape = circle, fill = red, inner sep=-2] at ($(f12)-(0:1)$) {};
\node [draw, shape = circle, fill = black, inner sep=-2, scale=0.2] at ($(e12)+(-120:0.5)$) {};
\node [draw, shape = circle, fill = black, inner sep=-2, scale=0.2] at ($(e12)+(-120:0.75)$) {};
\node [draw, shape = circle, fill = black, inner sep=-2, scale=0.2] at ($(e12)+(-120:1)$) {};

\node [draw, shape = circle, fill = black, inner sep=-2, scale=0.2] at ($(e12)!0.5!(f12) + (-90:0.5)$) {};
\node [draw, shape = circle, fill = black, inner sep=-2, scale=0.2] at ($(e12)!0.5!(f12) + (-90:0.75)$) {};
\node [draw, shape = circle, fill = black, inner sep=-2, scale=0.2] at ($(e12)!0.5!(f12) + (-90:1)$) {};

\node [draw, shape = circle, fill = black, inner sep=-2, scale=0.2] at ($(f12)+(-60:0.5)$) {};
\node [draw, shape = circle, fill = black, inner sep=-2, scale=0.2] at ($(f12)+(-60:0.75)$) {};
\node [draw, shape = circle, fill = black, inner sep=-2, scale=0.2] at ($(f12)+(-60:1)$) {};

\end{tikzpicture}
\caption{Semilattice of idempotents of $FI_1$. The red dots are idempotents of $\langle u \rangle$ where $u = (-a, 3, b) \in FI_1$. 
}
\label{fig:semilattice_general}
\end{figure}

We now proceed to prove our first main result:

\begin{namedtheorem}[Theorem A]
\label{thm:new_S_bar}
If $S$ is an inverse subsemigroup of the monogenic free inverse semigroup generated by non-idempotent elements, then $S$ is finitely generated, and hence finitely presented.
\end{namedtheorem}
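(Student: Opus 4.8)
The plan is to construct an explicit finite generating set for $S$ and verify it by induction on the $\mathscr{D}$-class. First I would unpack \cref{lem:O&S}: it supplies the period $p$, which satisfies $p\ge 1$ since $S$ contains a non-idempotent and so not every middle component is $0$; the minimal first and last components $A:=a_{\text{min}}$ and $B:=b_{\text{min}}$; and the distinguished elements $\alpha=(-A,p,b_\alpha)$ and $\beta=(-a_\beta,-p,B)$ of $S$ from part~(ii). Computing with the multiplication formula gives $\alpha^k=(-A,kp,b_\alpha+(k-1)p)$ and $\beta^k=(-(a_\beta+(k-1)p),-kp,B)$, and (as in the discussion around Reilly's theorem and \cref{fig:semilattice_general}) shows that $E(\langle\alpha\rangle)$ and $E(\langle\beta\rangle)$ are $p$-periodic subsemilattices of $E$ anchored at $(-A,0,b_\alpha)$ and $(-a_\beta,0,B)$. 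A key observation here is that a deep idempotent such as $(-(A+mp),0,b_\alpha+np)$ equals in $S$ the product $\alpha^{n+1}(\alpha^{-1})^{m+n+1}\alpha^{m}$ (for $m\ge 1$; similarly when $m=0$), i.e.\ a product of elements each equal to $\alpha^{\pm1}$; so idempotents lying deep in the semilattice are already products of $\mathscr{D}$-bounded elements of $S$. Finally, each $\mathscr{D}$-class $D_n$ of $FI_1$ is finite (it has $(n+1)^2$ elements), so $S$ meets any bounded region of $\mathbb{N}_0\times\mathbb{Z}\times\mathbb{N}_0$ in a finite set, and the first and last components of elements of $S$ realise only finitely many residue classes modulo~$p$.

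I would then fix a sufficiently large constant $N$ (depending on $p,A,B,b_\alpha,a_\beta$ and on a bounded witness set described next) and take as candidate generating set
\[
G \;:=\; \bigl(S\cap(D_1\cup\dots\cup D_N)\bigr)\;\cup\;\{\alpha,\beta\}\;\cup\;W,
\]
where $W$ is a finite subset of $S$ containing, for each residue pair $(r,s)$ realised by $S$, a witness element whose first and last components are congruent to $r$ and $s$ modulo $p$ and as small as possible. Then $G$ is finite, and I claim $S=\langle G\rangle$, proving it by induction on $\mathscr{D}(u)=a+b$ for $u=(-a,qp,b)\in S$. The base case $a+b\le N$ is immediate. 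For the inductive step, with $a+b>N$, write $u=(-(A+xp+r),qp,B+yp+s)$ with $-x\le q\le y$ by \cref{lem:O&S}(iii),(iv), so $x+y$ is large. The moves available are left multiplication by $\alpha^{\pm1}$ and right multiplication by $\beta^{\pm1}$ (together with their mirror images under the automorphism $(-a,p,b)\mapsto(-b,-p,a)$ of $FI_1$): these shift the middle exponent $q$ by $\pm1$ while trading size between the first and last components. Iterating them reduces $u$ to the case $q=0$, i.e.\ to the idempotent $uu^{-1}$, which is then deep in the semilattice; matching its residues to a witness in $W$ and invoking the observation above writes it as a product of $\mathscr{D}$-smaller elements of $S$. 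At each step one verifies, using the multiplication formula and the description of the natural partial order, that the factors produced genuinely lie in $S$ — this is exactly where \cref{lem:O&S}(ii)--(iv) and the periodicity of $E(\langle\alpha\rangle),E(\langle\beta\rangle)$ enter — and that they lie in a strictly smaller $\mathscr{D}$-class, closing the induction. Finite presentability then follows from Oliveira and Silva's theorem.

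The main obstacle, and the reason the bookkeeping is delicate, is that multiplication in $FI_1$ never lowers the $\mathscr{D}$-class: one always has $\mathscr{D}(vw)\ge\max(\mathscr{D}(v),\mathscr{D}(w))$, because $\max(a_v,a_w-p_v)+\max(b_v,b_w+p_v)$ dominates both $a_v+b_v$ and $a_w+b_w$. Hence one cannot simply cancel a generator off $u$ to reach a smaller element — every peeling move preserves $a+b$ exactly — and the reduction must instead be organised around a subtler parameter, for instance the distance of $q$ from the boundary values $-x,y$ of \cref{lem:O&S}(iv) together with how far the residues of $u$ lie from those of the anchors $\alpha,\beta$, all while guaranteeing that each auxiliary factor really lies in $S$ rather than merely in $FI_1$. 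Choosing $N$ and $W$ large enough to absorb the finitely many ``boundary'' residue classes, after which $S$ is genuinely $p$-periodic and controlled by $\alpha$ and $\beta$, is the technical heart of the argument.
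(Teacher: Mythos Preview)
Your overall architecture matches the paper's: first express any element of a deep $\mathscr{R}$-class $R_{a,b}$ as the idempotent $(-a,0,b)$ times a word in $\alpha^{\pm1},\beta^{\pm1}$ (the paper's Claims~2--4), and then express every idempotent of $S$ as a word in a fixed finite set of non-idempotents (the paper's Claim~1). The gap is in your treatment of the second step. You propose a witness set $W$ containing, for each residue pair $(r,s)$ modulo $p$ realised by $S$, a single element $w_{r,s}$ with first and last components ``as small as possible'', and then appeal to the observation that deep idempotents in $E(\langle w_{r,s}\rangle)$ are words in $w_{r,s}^{\pm1}$. But if $w_{r,s}=(-a_0,q_0p,b_0)$ then $E(\langle w_{r,s}\rangle)$ is only $|q_0|p$-periodic, not $p$-periodic, so when $|q_0|>1$ it covers only a sparse sublattice of the idempotents in residue class $(r,s)$; and you have no control over $q_0$. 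Moreover, for a fixed residue pair there need not be an element with \emph{both} components simultaneously minimal: the elements of $S$ realising $(r,s)$ can have several incomparable ``minimal corners'', each of which requires its own witness. So a single $w_{r,s}$ per residue class does not suffice.

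This is exactly the content of the paper's Claim~1, and its proof is the technical heart of the theorem. One first takes the finitely many maximal idempotents $e_1,\dots,e_k$ of $E(S)$, chooses for each a non-idempotent $t_i$ with $t_it_i^{-1}=e_i$, passes to the coarser period $q=|p_1\cdots p_k|$, and then for every residue class modulo $q$ takes \emph{all} of its maximal idempotents (finitely many, as $E(FI_1)\cong(\mathbb{N}_0\times\mathbb{N}_0)\setminus\{(0,0)\}$) and forms from each a non-idempotent witness $u=e\,t_i$. Only then does one obtain $E(S)=\bigcup_{u\in T_1}E(\langle u\rangle)$ with $T_1$ finite. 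Once this is in place no induction on $\mathscr{D}(u)$ is needed: every deep idempotent is directly a word in some $u\in T_1$, and every other deep element is such an idempotent times a word in $\alpha^{\pm1},\beta^{\pm1}$. (A minor correction: for $a+b>N$ the right multiplications by $\alpha^{\pm1},\beta^{\pm1}$ preserve the entire $\mathscr{R}$-class $R_{a,b}$, not just the $\mathscr{D}$-class; there is no ``trading size between the first and last components''.)
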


\begin{proof}
We will show that $S$ is finitely generated. Finite presentability will then follow by Theorem~\ref{thm:o&s}.

By assumption, $S=\langle T\rangle$, where $T:=S\setminus E(S)$ is the set of all non-idempotent elements of $S$.
 Note that $T$ is always infinite.
  
\begin{claim}\label{S_bar_claim_1}
The semilattice $E(S)$ is a finite union of semilattices of idempotents of monogenic inverse subsemigroups of $S$, i.e. there is a finite set $T_1\subseteq T$
such that 
\[
E(S) = \bigcup_{u\in T_1} E(\langle u\rangle).
\]
\end{claim}

\begin{proof}
First, we show that for each idempotent $e\in E(S)$ there exists a non-idempotent element $t \in S$ which is $\mathscr{R}$-related to $e$. 
Indeed, if $e = a_1\dots a_l$ with $a_i \in T\cup T^{-1}$, define $t:=ea_1$. Clearly, $t$ is non-idempotent. A straightforward calculation shows that $tt^{-1} = e$, and hence $t \mathscr{R} e$. Therefore, $E(S)$ is an infinite union of semilattices of monogenic inverse subsemigroups of $FI_1$:
\[
E(S) = \bigcup\limits_{t\in T}E(\langle t \rangle). 
\]

Since $E(FI_1)$ and $(\mathbb{N}_0\times\mathbb{N}_0)\setminus\{(0,0)\}$ are isomorphic as ordered sets,
it follows that $E(S)$ has only finitely many maximal elements
$e_{1}, \dots, e_{k}$. 
For each $i \in \{1, \dots, k\}$,  let $t_i := (-a_i, p_i, b_i)\in S$ be a non-idempotent element which is $\mathscr{R}$-related to $e_{i}$. 
Each $E(\langle t_i \rangle)$ is a periodic subset with periodicity $\vert p_i \vert$. Hence the union
\[
X := \bigcup\limits_{i=1}^{k}E(\langle t_i \rangle)
\]
is $q$-periodic with $q = \vert p_1\dots p_k \vert$.
In particular, $X$ contains all the elements of the form $(-(a_i+xq),0, b_i+yq)$ for $i \in \{1, \dots, k\}$ and $x,y\in \mathbb{N}_0$.

Now consider an idempotent $e$ in $E(S)\setminus X$. Since $e_{1}, \dots, e_{k}$ are maximal in $E(S)$, we have
$e=(-(a_i+\alpha), 0,b_i+\beta)$ for some $i \in \{1, \dots, k\}$ and $\alpha, \beta \in \mathbb{N}_0$. 
Dividing $\alpha$ and $\beta$ by $q$, this yields
$e=(-(a_i + xq + r), 0, b_i + yq + s)$ where $i \in \{1, \dots, k\}$, $x, y\in \mathbb{N}_0$ and $r,s\in \{0, 1, \dots, q-1\}$. 
The assumption that $e\in E(S)\setminus X$ simply means that we cannot have $r=s=0$.
Now let $P\subseteq \{1,\dots,k\}\times\{0,\dots,q-1\}\times\{0,\dots,q-1\}$ be the (finite) set of all $(i,r,s)$ that arise in this way.

For each $(i,r,s)\in P$, let $E_{i,r,s}$ be the finite set of maximal elements in the set of all idempotents of $S$ of the form $(-(a_i+xq+r), 0,b_i+yq+s)$.
For each $e\in E_{i,r,s}$ let $u_{i,r,s,e}:=et_i$.
Recalling that $t_i = (-a_i, p_i, b_i)$ we see that $u_{i,r,s,e}$ is a non-idempotent element of $S$ that is $\mathscr{R}$-related to $e$.
Define
\[
Y:= \bigcup_{(i,r,s)\in P} \:\bigcup_{e\in E_{i,r,s}} E(\langle u_{i,r,s,e}\rangle),
\]
and notice that this is again a finite union of semilattices of idempotents of monogenic semigroups, and that it is $q$-periodic.

We claim that $E(S) = X \cup Y$. 
Indeed, any idempotent $e \in E(S)$ is of the form \mbox{$(-(a_i+xq+r), 0,b_i+yq+s)$} for some $i \in \{1, \dots, k\}$, $x, y \in \mathbb{N}_0$, and $(r,s) \in \{0, 1, \dots, q-1\}^{2}$. 
If $(r, s) = (0, 0)$, then $e \in X$. 
Otherwise, $e\leq e'\in E_{i,r,s}$, and hence $e\in E(\langle u_{i,r,s,e'}\rangle)\subseteq Y$.
This proves \cref{S_bar_claim_1}.
\end{proof}

For the rest of the proof of \nameref{thm:new_S_bar}, 
we will fix a finite set $T_1\subseteq T$ guaranteed by \cref{S_bar_claim_1}. 
We aim to show that there exists another finite set $T_2$ of non-idempotent elements of $T$, such that $S = \langle T_1, T_2 \rangle$. 
Recall $a_{\text{min}}, b_{\text{min}}, p$ and the elements
\[
\alpha = (-a_{\text{min}}, p, b_{\alpha}), \;\; \beta = (-a_{\beta}, -p, b_{\text{min}})
\]
from \cref{lem:O&S}. Define 
\[
N := a_{\beta}+b_{\alpha}.
\]

%\Ccomm{From the paragraph below to the end of the proof, I changed $R_{(a,b)}$ to $R_{a,b}$}

Let us consider an $\mathscr{R}$-class $R_{a,b}$ of $FI_1$ which intersects $S$, and  such that $a+b>N$. 
We know that
$R_{a,b}$ consists of all $(-a,n,b)$ with $-a\leq n\leq b$, and that the second component of any triple in $S$ is divisible by $p$.
Therefore $\{ (-a,qp,b)\colon -a\leq qp\leq b\}\subseteq R_{a,b}\cap S$.
We aim to show that the two sets are in fact equal, and that all these elements can be obtained just from $\epsilon:=(-a,0,b)$, $\alpha$ and $\beta$.

Following \cref{lem:O&S}, write $a$ and $b$  as $a = a_{\text{min}}+xp+r$ and $b = b_{\text{min}}+yp+s$ where $x,y \in \mathbb{N}_0$, $0 \leq r, s <p$. Then any element $(-a, qp, b) \in R_{a,b}\cap S$ has $-x \leq q \leq y$. 
The  following two claims 
show how to obtain the elements $(-a,(q\pm 1)p,b)$ using $u:=(-a,qp,b)$, $\alpha$ and $\beta$.

\begin{claim}\label{new_S_bar_claim_2}
If $-(x-1)p \leq qp \leq b$ then $(-a, (q-1)p, b)\in \langle u,\alpha,\beta\rangle$.
\end{claim}

\begin{proof}
When $-(x-1)p \leq qp \leq b-b_{\alpha} $ we have
\begin{align*}
    u\alpha^{-1} &= (-a, qp, b)(-(a_{\text{min}}+p), -p, b_{\alpha}-p)\\
    &=(-\max(a, a_{\text{min}}+p-qp), (q-1)p, \max(b, b_{\alpha}-p+qp))\\
    &=(-a, (q-1)p, b)\in R_{a,b},
\end{align*}
since $a=a_{\text{min}}+xp+r$;
whereas when $b-b_{\alpha} < qp \leq b$ we have:
\begin{align*}
    u\beta &= (-a, qp, b)(-a_{\beta}, -p, b_{\text{min}})\\
    &=(-\max(a, a_{\beta}-qp), (q-1)p, \max(b, b_{\text{min}}+qp))\\
    &=(-a, (q-1)p, b) \in R_{a,b}.
\end{align*}
The last equality follows from $a+b > a_{\beta}+b_{\alpha}$, which gives $qp > b-b_{\alpha} > -a+a_{\beta}$, and $b = b_{\text{min}}+yp+s$ and $q \leq y$. This completes the proof of Claim \ref{new_S_bar_claim_2}.
\end{proof}

\begin{claim}\label{new_S_bar_claim_3}
If $-a \leq qp \leq (y-1)p$ then $(-a, (q+1)p, b)\in \langle u,\alpha,\beta\rangle$.
\end{claim}

\begin{proof}
The proof is analogous to the that of \cref{new_S_bar_claim_2}:
if $-a + a_{\beta} \leq qp \leq (y-1)p$ then $u\beta^{-1} = (-a, (q+1)p, b)$, and if $-a \leq qp < -a+a_{\beta}$ then $u\alpha = (-a, (q+1)p, b)$.
\end{proof}

\begin{claim}
\label{cl:RabS}
$R_{a,b}\cap S=\{ (-a,qp,b)\::\: -a\leq qp\leq b\} \subseteq \langle \epsilon,\alpha,\beta\rangle$.
\end{claim}

\begin{proof}
Since $R_{a,b}\cap S\neq\emptyset$, certainly $\epsilon\in R_{a,b}\cap S$.
Successively applying Claims \ref{new_S_bar_claim_2}, \ref{new_S_bar_claim_3} yields 
$\{ (-a,qp,b)\::\: -a\leq qp\leq b\} \subseteq \langle \epsilon,\alpha,\beta\rangle\subseteq S$, and the claim follows.
\end{proof}

We can now complete the proof of \nameref{thm:new_S_bar} as follows. Recall that each $\mathscr{D}$-class in $FI_1$ is finite. Let 
$T_{2} := \bigl(\bigcup_{i=1}^{N}D_i\bigr)\cap S$, a finite set.
Note that $\alpha, \beta \in T_2$ as $a_{\text{min}}+b_{\alpha}, a_{\beta}+b_{\text{min}} \leq N$. 
We claim that $S = \langle T_1, T_2 \rangle$. 
To see this note that $E(S) \subseteq \langle T_1, T_2\rangle$ by \cref{S_bar_claim_1}. 
Now, consider an arbitrary $R_{a,b}$ which intersects $S$.
If $a+b\leq N$ then $R_{a,b}\cap S\subseteq D_{a+b} \subseteq T_2\subseteq \langle T_1,T_2\rangle$.
If $a+b>N$, then Claim \ref{cl:RabS} gives $R_{a,b}\cap S\subseteq \langle E(S),\alpha,\beta\rangle\subseteq \langle T_1,T_2\rangle$.
Hence $S$ is finitely generated, and the proof of the theorem is complete.
\end{proof}

We can immediately derive:

\begin{namedtheorem}[Corollary B]
\label{cor:gen}
Let $S$ be an inverse subsemigroup of the monogenic free inverse semigroup. Then, $S$ is finitely generated modulo the semilattice of its idempotents.
\end{namedtheorem}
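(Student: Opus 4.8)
The plan is to reduce the statement directly to \nameref{thm:new_S_bar}, with essentially no further work. First I would dispose of the degenerate case: if $S=E(S)$ is itself a semilattice, then $S=\langle E(S)\rangle$ and $S$ is trivially finitely generated modulo $E(S)$ (with empty extra generating set). So I may assume $S\neq E(S)$, and put $T:=S\setminus E(S)$, the set of non-idempotent elements of $S$; this set is non-empty (in fact infinite, as noted in the proof of \nameref{thm:new_S_bar}).

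Next I would pass to the inverse subsemigroup $S':=\langle T\rangle$ of $FI_1$. By construction $S'$ is generated by non-idempotent elements, so \nameref{thm:new_S_bar} applies and gives a finite set $A\subseteq S'$ with $S'=\langle A\rangle$.

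It then remains to check that $S=\langle E(S),A\rangle=\langle E(S)\cup A\rangle$. The inclusion $\langle E(S)\cup A\rangle\subseteq S$ is immediate, since $E(S)\subseteq S$ and $A\subseteq S'\subseteq S$. Conversely, any $s\in S$ is either an idempotent, hence in $E(S)$, or a non-idempotent, hence in $T\subseteq\langle A\rangle\subseteq\langle E(S)\cup A\rangle$; so $S\subseteq\langle E(S)\cup A\rangle$. Thus $S=\langle E(S),A\rangle$ with $A$ finite, which is exactly the assertion that $S$ is finitely generated modulo $E(S)$.

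There is no real obstacle here: all the substance lies in \nameref{thm:new_S_bar}. The only points needing (minimal) care are that $\langle T\rangle$ genuinely satisfies the hypothesis of \nameref{thm:new_S_bar} — it is generated by non-idempotents, even though $\langle T\rangle$ itself will in general acquire additional idempotents — and the bookkeeping distinction between ``finitely generated modulo $E(S)$'' and plain finite generation, the latter being false in general, as $E(FI_1)$ shows.
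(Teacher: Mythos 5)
Your proposal is correct and follows essentially the same route as the paper: both reduce the statement to \nameref{thm:new_S_bar} by passing to the subsemigroup generated by the non-idempotent elements of $S$ and observing that $S$ is generated by $E(S)$ together with that (finitely generated) subsemigroup. Your version merely spells out the degenerate case $S=E(S)$ and the two-sided inclusion check, which the paper leaves implicit.
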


\begin{proof}
Let $\overline{S}$ be the inverse subsemigroup of $S$ that is generated by all non-idempotent elements of $S$. 
Then $S = \langle E(S), \overline{S} \rangle$, and the corollary follows from 
\nameref{thm:new_S_bar}.
\end{proof}

We finish this section with an observation on the subsemigroup $\overline{S}$ generated by non-idempotent elements:

\begin{proposition}
\label{prop:ideal}
Let $S$ be an inverse subsemigroup of $FI_1$. The inverse subsemigroup $\overline{S}$ of $S$ generated by all non-idempotent elements of $S$ is an ideal.
\end{proposition}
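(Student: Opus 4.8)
The goal is to show that $\overline{S}$ is an ideal of $S$, i.e. that $sx, xs \in \overline{S}$ for every $s \in S$ and every $x \in \overline{S}$. Since $\overline{S}$ is inverse and closed under inversion, and $(sx)^{-1} = x^{-1}s^{-1}$, it suffices to prove one-sided absorption, say $sx \in \overline{S}$ for all $s \in S$, $x \in \overline{S}$. Moreover, $S$ is generated by $E(S) \cup T$ where $T = S \setminus E(S)$, and $\overline{S} = \langle T \rangle$, so by a routine induction on the length of a word for $s$ it is enough to handle the two cases $s \in T$ and $s \in E(S)$. The first case is immediate: if $s \in T \subseteq \overline{S}$ then $sx \in \overline{S}$ because $\overline{S}$ is a subsemigroup. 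So the whole statement reduces to the single claim: \emph{for every idempotent $e \in E(S)$ and every $x \in \overline{S}$, the product $ex$ lies in $\overline{S}$}.

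To establish this claim, I would first reduce further to the case where $x$ is a \emph{generator} of $\overline{S}$, i.e. $x \in T$; indeed if $x = t_1 \cdots t_n$ with $t_i \in T$, then $ex = (et_1)t_2 \cdots t_n$, and once we know $et_1 \in \overline{S}$ the remaining factors keep us inside $\overline{S}$. So it all comes down to showing: \textbf{for $e \in E(S)$ and $t \in T$, the element $et$ is non-idempotent, hence $et \in \overline{S}$} (as $et \in S$ and $et \notin E(S)$). Now I would use the concrete triple description of $FI_1$. Write $e = (-a,0,b)$ and $t = (-c, n, d)$ with $n \neq 0$. By the multiplication rule,
\[
et = (-a,0,b)(-c,n,d) = \bigl(-\max(a, c), \, n, \, \max(b, d+0)\bigr)\cdot\text{(adjusted)}
\]
— more precisely $et = (-\max(a,c-0), 0+n, \max(b, d+0)) = (-\max(a,c), n, \max(b,d))$, whose middle component is $n \neq 0$, so $et$ is not an idempotent of $FI_1$. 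Hence $et \in T \subseteq \overline{S}$.

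Putting the pieces together: $ex \in \overline{S}$ for all $e \in E(S)$, $x \in T$, hence (by the factorisation argument) for all $x \in \overline{S}$; combined with the trivial case $s \in T$ and the generating-word induction, this gives $sx \in \overline{S}$ for all $s \in S$, $x \in \overline{S}$; and applying inverses gives $xs \in \overline{S}$ as well. Therefore $\overline{S}$ is an ideal of $S$. The only mild subtlety — and the one place to be careful — is the reduction steps: one must make sure the induction on word length for $s$ is set up so that at each stage the "remaining" element still lies in $S$ (it does, since $S$ is closed under products), and that the base cases genuinely are $s \in T$ and $s \in E(S)$, using that $S = \langle E(S) \cup T\rangle$; everything else is a one-line computation in the triple model. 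I expect no real obstacle here — the statement is essentially a consequence of the fact that multiplying by an idempotent cannot change the (nonzero) middle coordinate of a triple to $0$.
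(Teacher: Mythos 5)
Your proof is correct and follows essentially the same route as the paper: both arguments reduce to the single observation that the product of an idempotent and a non-idempotent of $FI_1$ is non-idempotent (which you verify explicitly in the triple model, where the paper just asserts it). The only organisational difference is in handling an idempotent $x\in\overline{S}$: you factor $x$ into non-idempotent generators and peel off the first one, whereas the paper writes such an $x$ as $zz^{-1}$ with $z$ non-idempotent via Claim~1 of Theorem~A --- your version is marginally more self-contained, but the content is the same.
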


\begin{proof}
Let $x\in S$ and $y\in \overline{S}$, and we want to prove that $xy,yx\in\overline{S}$. 
We will prove that $xy\in \overline{S}$, and the claim for $yx$ follows by symmetry. If $x\in \overline{S}$ then $xy\in\overline{S}$; so let us suppose that $x\in S\setminus\overline{S}$, which certainly means that $x$ is an idempotent. Now, if $y$ is a non-idempotent element, then it is easy to see that $xy$ is also non-idempotent, and hence $xy\in\overline{S}$. Finally, if $y$ is an idempotent, then $y=zz^{-1}$ for some non-idempotent element $z$ by Claim~\ref{S_bar_claim_1} in the proof of \nameref{thm:new_S_bar}, so that
$xy=(xz)z^{-1}\in \overline{S}$.
\end{proof}

%%%%%%%%%%%%%%%%%%%%%%%%%%%%%%%%%%%%%%%%%%%%%%%%%%%%%%%%%%%%%%%%%%%%%%%%%%%%%%%%%%%%%%%%%%%%%%%%%%%%%%%%%%%%%%%%%%%%%%%%%%%%%%%%%%%%%%%%%%%%%%%%%%%%%%%%
%%%PRESENTATIONS%%%

\section{Presentations}
\label{sec:new_present}

In this section, we will obtain presentations for a general inverse subsemigroup of $FI_1$ using the generating set result, \nameref{cor:gen}. 

Recall that given an inverse semigroup $S$ and an inverse subsemigroup 
$T $ of $S$ we say that $S$ is finitely presented modulo $T$ if $S = \Inv \langle X, Y \mid R, Q \rangle$ where
$T= \Inv \langle X \mid R \rangle$ and $Y$ and $Q$ are finite (\cref{def:gen_mod}). Our guiding question for this section is whether every inverse subsemigroup of $FI_1$ is finitely presented modulo the semilattice of its idempotents. To treat this question, we first give a `nice' infinite presentation:

\begin{theorem}
\label{thm:new_amlagam}
Let $S$ be a non-semilattice inverse subsemigroup of $FI_1$. 
Let $\Inv \langle X_E \mid R \rangle$ be any presentation for the semilattice $E(S)$ of the idempotents of $S$ in terms of the generating set
$X_E := \{x_e \colon e\in E(S)\}$ in one-one correspondence with $E(S)$. 
Also let $\Inv \langle Y \mid Q \rangle$ be a presentation for the subsemigoup $\overline{S}$ of $S$ generated by all the non-idempotent elements of $S$.
For each $e \in E(S) \cap \overline{S}$, let $w_e\in (Y\cup Y^{-1})^+$ be a word that represents $e$ in $\overline{S}$, and let 
\begin{align*}
    P := \{w_e = x_e \colon e \in E(S)\cap \overline{S}\}.
\end{align*}
Then, $S$ is defined by 
\begin{align}
    \Inv \langle X_E, Y \mid R, Q, P \rangle. \label{eq:new_amalgam_1}
\end{align}
\end{theorem}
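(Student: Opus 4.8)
The presentation~\eqref{eq:new_amalgam_1} is, up to notation, the standard presentation of the amalgamated free product of the inverse semigroups $E(S)$ and $\overline{S}$ over their common inverse subsemigroup $U := E(S)\cap\overline{S}$. Since $\overline{S}$ is an ideal of $S$ by \cref{prop:ideal}, and $E(S)$ is an inverse subsemigroup of $S$ with $E(S)\cup\overline{S}=S$ (by \nameref{cor:gen}, every element of $S$ lies in $\langle E(S),\overline{S}\rangle$, but in fact more is true: if $x\in S\setminus\overline{S}$ then $x$ is an idempotent, so $S = E(S)\cup\overline{S}$ as a set), the strategy is to verify directly, via Stephen's procedure, that the Sch\"utzenberger graphs computed from the presentation~\eqref{eq:new_amalgam_1} coincide with the Sch\"utzenberger graphs of elements of $S$ as an inverse subsemigroup of $FI_1$; equivalently, to show that the natural map from the semigroup $\widetilde S$ defined by~\eqref{eq:new_amalgam_1} onto $S$ is injective.

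First I would set up the natural surjective homomorphism $\varphi\colon\widetilde S\to S$ sending $x_e\mapsto e$ and each $y\in Y$ to the element of $\overline{S}\subseteq S$ it names; this is well-defined because $S$ satisfies all the relations $R$ (as $E(S)=\Inv\langle X_E\mid R\rangle$), all the relations $Q$ (as $\overline{S}=\Inv\langle Y\mid Q\rangle$), and all the relations $P$ (by the choice of the $w_e$), and it is surjective because $S = E(S)\cup\overline{S}$ is generated by $X_E\cup Y$. The work is to prove injectivity. I would do this by showing that the two inclusions $E(S)\hookrightarrow\widetilde S$ and $\overline{S}\hookrightarrow\widetilde S$ (induced by $X_E\hookrightarrow X_E\cup Y$ and $Y\hookrightarrow X_E\cup Y$) are themselves injective, that their images intersect exactly in the image of $U$, and that every element of $\widetilde S$ lies in $E(S)\cup\overline{S}$ inside $\widetilde S$ — i.e.\ that the amalgam does not create anything new. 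Concretely: take a word $w\in(X_E\cup Y\cup X_E^{-1}\cup Y^{-1})^+$; using the relations $P$ one can push all the $x_e$-letters adjacent to $Y$-letters into $Y$-words, and since $\overline{S}$ is an ideal, any $Y$-letter `absorbs' neighbouring $x_e$-letters on either side, so $w$ reduces either to a single $x_e$ (if $w$ involves no $Y$-letters) or to a word entirely in $Y\cup Y^{-1}$. Hence in $\widetilde S$ we have $\widetilde S = \overline{E(S)}\cup\overline{\overline{S}}$ where the bars denote images; this mirrors the decomposition $S = E(S)\cup\overline{S}$.

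For the injectivity of the two inclusions and the control of the intersection, the cleanest route is Stephen's iterative construction (\cref{thm:stephen} and the surrounding discussion): the Sch\"utzenberger graph $S\Gamma(w)$ computed with respect to~\eqref{eq:new_amalgam_1} is built by complete determinations and expansions driven by $R$, $Q$, $P$. I would argue that starting from a linear graph of a $Y$-word, the $R$- and $P$-expansions can only attach $E(S)$-labelled `pendant' structure that is already forced inside the Sch\"utzenberger graph of the corresponding element of $FI_1$ — because $E(\langle u\rangle)\subseteq E(S)$ for each generator, and these idempotent subgraphs are exactly what the $Q$-graph of $\overline{S}$ already records at each vertex — so the limit graph is isomorphic to the genuine Sch\"utzenberger graph in $FI_1$. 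Then \cref{thm:stephen}(ii) gives that two words are equal in $\widetilde S$ iff they are equal in $S$, which is injectivity of $\varphi$. The main obstacle is precisely this graph-theoretic bookkeeping: verifying that no expansion forced by $P$ (the amalgamating relations $w_e=x_e$) glues together two vertices that stay distinct in $S$, i.e.\ that the amalgamated free product embeds both factors. This is where the special structure of $FI_1$ — that idempotents of $\overline{S}$ and of $E(S)$ sit compatibly, as in \cref{fig:semilattice_general}, and that $\overline{S}$ is an ideal — must be used in an essential way; a naive appeal to general amalgamation theorems for inverse semigroups will not suffice because inverse semigroup amalgams need not embed, so the argument genuinely has to exploit the concrete model of $FI_1$.
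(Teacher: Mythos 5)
Your second paragraph contains the paper's key idea: every word over $X_E\cup Y$ reduces, modulo $R\cup Q\cup P$, to a word purely over $X_E$ or purely over $Y$, using the ideal property of $\overline{S}$ (\cref{prop:ideal}) to absorb $x_e$-letters adjacent to $Y$-letters (concretely, $x_e^\epsilon y^\delta = x_e^\epsilon y^\delta y^{-\delta}y^\delta \stackrel{Q}{=} x_e^\epsilon w_f y^\delta \stackrel{P}{=} x_e^\epsilon x_f y^\delta \stackrel{R}{=} x_g y^\delta \stackrel{P}{=} w_g y^\delta$ with $f$ the idempotent $y^\delta y^{-\delta}$ and $g=ef\in E(S)\cap\overline{S}$). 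But you then fail to notice that this reduction already finishes the proof, and instead you detour into an argument that is both unnecessary and not actually carried out. Once every element of the presented semigroup $\widetilde S$ is represented by a pure word, a relation $u_1=u_2$ holding in $S$ splits into three trivial cases: if $\overline{u}_1,\overline{u}_2$ are both over $X_E$ (resp.\ both over $Y$), then $\overline{u}_1=\overline{u}_2$ is a consequence of $R$ (resp.\ $Q$), because $\Inv\langle X_E\mid R\rangle$ presents $E(S)$ and $\Inv\langle Y\mid Q\rangle$ presents $\overline{S}$ \emph{by hypothesis}; in the mixed case both words represent some $e\in E(S)\cap\overline{S}$ and $\overline{u}_1\stackrel{R}{=}x_e\stackrel{P}{=}w_e\stackrel{Q}{=}\overline{u}_2$. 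Likewise, the ``injectivity of the two inclusions'' and the ``control of the intersection'' that you propose to verify via Sch\"utzenberger graphs are automatic: the composites $E(S)\to\widetilde S\to S$ and $\overline{S}\to\widetilde S\to S$ are the set-theoretic inclusions, so nothing there requires proof.

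The genuine gap is therefore in your third paragraph. The claim that the limit of Stephen's procedure for the presentation \eqref{eq:new_amalgam_1} coincides with the Sch\"utzenberger graph inside $FI_1$ is exactly the content of the theorem, and your sketch (``the $R$- and $P$-expansions can only attach pendant structure already forced\ldots'') asserts rather than proves it; you yourself flag the required bookkeeping as ``the main obstacle'' without resolving it. So as written the argument is circular at the decisive step. (A side remark: your assertion that inverse semigroup amalgams need not embed their factors is false --- by a theorem of Hall, inverse semigroups have the strong amalgamation property --- but in any case embeddability of the factors is not the issue; what must be shown is that $S$ is not a \emph{proper} quotient of the amalgamated free product, which is precisely what the elementary case analysis above delivers.)
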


\begin{remark}
The reader may think of $\Inv\langle X_E\:|\: R\rangle$ as the Cayley table of $E(S)$, i.e. of $R$ as consisting of the relations $x_ex_f=x_{ef}$, $e,f\in E(S)$. Also, by \nameref{thm:new_S_bar}, the presentation $\Inv\langle Y\:|\: Q\rangle$ for $\overline{S}$ can be taken to be finite.
\end{remark}

\begin{proof}[Proof of \cref{thm:new_amlagam}]
It is clear that $S$ is generated by a set in one to one correspondence with $X_E\cup Y$ and that it satisfies the relations $R \cup Q \cup P$. Therefore, $S$ is a homomorphic image of the inverse semigroup defined by the presentation (\ref{eq:new_amalgam_1}). 
It remains to prove that for words $u_1$ and $u_2$ over $X_E \cup  Y $, if the equality $u_1 = u_2$ holds in $S$, then it can be deduced from $R \cup Q \cup P$.

In order to prove this, we claim that for every word $u\in (X_E\cup X_E^{-1}\cup Y\cup Y^{-1})^+$ there exists a word $\overline{u}\in (X_E\cup X_E^{-1})^+\cup (Y\cup Y^{-1})^+$ such that $u=\overline{u}$ is a consequence of $R \cup Q \cup P$.
The assertion is non-obvious only if $u$ contains letters from both $X_E\cup X_E^{-1}$ and $Y\cup Y^{-1}$.
In that case $u$ contains a subword of the form $x_e^\epsilon y^\delta$ or $y^{-\delta} x_e^{-\epsilon}$ with $e\in E$, $y\in Y$, $\epsilon,\delta\in\{-1,1\}$.
The element $y^\delta y^{-\delta}$ represents an idempotent $f$ from $\overline{S}$, and using \cref{prop:ideal} we have $g:=ef\in E(S)\cap \overline{S}$. Now we have the deduction
\[
x_e^\epsilon y^\delta = x_e^\epsilon y^\delta y^{-\delta} y^\delta \stackrel{Q}{=} x_e^\epsilon w_f y^{\delta}
\stackrel{P}{=} x_e^\epsilon x_f y^\delta\stackrel{R}{=}
x_g y^{\delta} \stackrel{P}{=} w_g y^\delta\in (Y\cup Y^{-1})^+.
\]
Taking inverses we see that also $y^{-\delta}x^{-\epsilon}$ can be transformed into a word over $Y$ using $R \cup Q \cup P$. Using this repeatedly, one can  eliminate all the occurrences of letters from $X_E\cup X_E^{-1}$ in $u$, and end up with a word $\overline{u}$ solely over $Y$.

Now, returning to our relation $u_1=u_2$ that holds in $S$, consider $\overline{u}_1$, $\overline{u}_2$.
If both $\overline{u}_1$ and $\overline{u}_2$ are words over $X_E$ (resp.\ $Y$) then the relation
$\overline{u}_1=\overline{u}_2$ is a consequence of $R$ (resp.\ $Q$).
Otherwise, we may assume without loss that $\overline{u}_1\in (X_E\cup X_E^{-1})^+$ and $\overline{u}_2\in (Y\cup Y^{-1})^+$. In this case they both represent an idempotent $e\in E(S)\cap \overline{S}$, and we have
\[
\overline{u}_1\stackrel{R}{=} x_e\stackrel{P}{=} w_e\stackrel{Q}{=} \overline{u}_2.
\]
In all the cases we have deduced $\overline{u}_1=\overline{u}_2$ as a consequence of $R \cup Q \cup P$,
and then $u_1=\overline{u}_1=\overline{u}_2=u_2$ is a derivation of $u_1=u_2$, as required.
\end{proof}

\begin{remark}
From the presentation (\ref{eq:new_amalgam_1}), one may recognise that $S$ is the free product of $E(S)$ and $\overline{S}$ with amalgamation $E(S)\cap \overline{S}$. For semigroup amalgams in general, see Chapter 8 of \cite{Howie}. For inverse semigroup amalgamated free products, see \cite{Bennett} and \cite{StephenAmalgam}.
\end{remark}

The presentation established in \cref{thm:new_amlagam} is infinite. In fact, intuitively one could say it is infinite for a number of different `reasons':
\begin{itemize}[leftmargin=8mm]
\item
The set of relations $R$ defining the semilattice $E(S)$ is infinite -- this is of necessity, as $E(S)$ is infinite.
\item
The set $P$ `linking' $E(S)$ and $\overline{S}$ is infinite, and this is the bar to concluding that $S$ is finitely presented modulo $E(S)$.
\item
In fact, the set $P$ features infinitely many elements of $\overline{S}$.
\end{itemize}

It will turn out that $S$ is not finitely presented modulo $E(S)$. But we are able to establish another `nice' presentation, which is based on the conjugation action of $S$ on $E(S)$, in which only finitely many words over the non-idempotent generators appear.

\begin{corollary}
\label{cor:new_conj}
Let $S$ be a non-semilattice inverse subsemigroup of $FI_1$. Suppose $A$ is a finite set of non-idempotents such that $S = \langle E(S), A \rangle$. Let $X_E := \{x_e\colon e\in E(S)\}$ and $Y_A := \{y_a \colon a \in A \}$ be sets in one-one correspondence with $E(S)$ and $A$, respectively. Suppose $E(S)$ is defined by $\Inv \langle X_E \mid R \rangle$. Define
\begin{align*}
    C := \{y_a^{-\epsilon} x_ey_a^\epsilon = x_{a^{-\epsilon}ea^\epsilon}  \colon a\in A,\, e\in E(S),\, \epsilon=\pm 1 \}.
\end{align*}
Then, there is a finite set $T$ of relations such that $S$ is defined by
\begin{align*}
    \Inv \langle X_E, Y_A \mid R, C, T \rangle.
\end{align*}
\end{corollary}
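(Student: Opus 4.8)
The plan is to derive this corollary from \cref{thm:new_amlagam} by showing that the infinite linking set $P$ (together with the data of $\overline{S}$) can be replaced by the conjugation relations $C$ plus finitely many extra relations. By \nameref{thm:new_S_bar} we may fix a \emph{finite} presentation $\Inv\langle Y_A\mid Q_0\rangle$ for $\overline{S}$ in terms of the generators corresponding to $A$ (note $A$ generates $\overline{S}$ since every non-idempotent of $S$ lies in $\langle E(S),A\rangle$ and, by \cref{prop:ideal}, a word over $E(S)\cup A$ representing a non-idempotent must actually involve a letter from $A$ — more carefully, one argues $\overline{S}=\langle A\rangle$ directly). Combining, \cref{thm:new_amlagam} gives that $S$ is defined by $\Inv\langle X_E,Y_A\mid R,Q_0,P\rangle$ where $P=\{w_e=x_e:e\in E(S)\cap\overline{S}\}$. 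So it suffices to prove that, modulo $R\cup Q_0$ together with $C$ and a suitable finite set $T$, every relation of $P$ is derivable, and conversely that $C$ is derivable from $R\cup Q_0\cup P$; the latter (soundness of $C$ in $S$) is immediate since the displayed conjugation equalities hold in $FI_1$.

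First I would check that $C$ makes sense and is sound: for $a\in A$ and $e\in E(S)$, the element $a^{-\epsilon}ea^\epsilon$ is again an idempotent of $S$, so $x_{a^{-\epsilon}ea^\epsilon}\in X_E$ and each relation in $C$ is a genuine relation of $S$. Next, the heart of the argument is to show that the relations $P$ follow from $R\cup C\cup Q_0\cup T$ for an appropriate finite $T$. The key observation is that $E(S)\cap\overline{S}$ is generated, as a semilattice, by the idempotents of the form $aa^{-1}$ and $a^{-1}a$ for $a\in A$ together with finitely many others arising from short products; more usefully, every $e\in E(S)\cap\overline{S}$ can be written in $\overline{S}$ as a product of \emph{conjugates} of such "basic" idempotents by words over $A\cup A^{-1}$. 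Concretely, one picks for each $a\in A$ a generator name, introduces into $X_E$ the idempotents $aa^{-1},a^{-1}a$, and lets $T$ be the finite set of relations expressing these particular $x_{aa^{-1}},x_{a^{-1}a}$ as the corresponding words $y_ay_a^{-1}$, $y_a^{-1}y_a$ over $Y_A$ (these are the only members of $P$ one keeps explicitly). Then for a general $e\in E(S)\cap\overline{S}$, writing $e=vv^{-1}$ where $v$ is a word over $A\cup A^{-1}$ ending in some letter, one pushes the conjugating letters inward one at a time using $C$: a relation of the form $x_e = y^{-\epsilon_k}\cdots y^{-\epsilon_1}x_{aa^{-1}}y^{\epsilon_1}\cdots y^{\epsilon_k}$ is obtained by $k$ applications of $C$ starting from the $T$-relation $x_{aa^{-1}}=y_ay_a^{-1}$, and the right-hand side collapses under $Q_0$ to the word $w_e$. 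This recovers $w_e=x_e$, i.e.\ the relation of $P$ indexed by $e$.

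More precisely, the induction I would run is: for every word $v$ over $Y_A\cup Y_A^{-1}$, if $v$ represents an idempotent $e$ of $\overline{S}$ then $x_e = v v^{-1}$ is derivable from $R\cup C\cup Q_0\cup T$; equivalently $v=x_e$ is derivable whenever $v$ itself already represents $e$ (using $vv^{-1}=v$ in $\overline{S}$ via $Q_0$). For $v$ of length $1$, $v=y_a^{\pm1}$ need not be idempotent so the base case is really $v=y_ay_a^{-1}$ or $y_a^{-1}y_a$, handled by $T$. For longer $v$, write $v\equiv y_a^{\epsilon}v'$; then $vv^{-1}=y_a^\epsilon(v'v'^{-1})y_a^{-\epsilon}$ in $\overline{S}$, apply the inductive hypothesis to $v'v'^{-1}$ to rewrite it as $x_f$ with $f=v'v'^{-1}$, and then apply the appropriate $C$-relation to turn $y_a^\epsilon x_f y_a^{-\epsilon}$ into $x_{a^\epsilon f a^{-\epsilon}}=x_e$. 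The hard part — and the main obstacle — will be the bookkeeping that ensures the set $T$ is genuinely finite and that the rewriting terminates: one must be careful that introducing $x_{aa^{-1}}$, $x_{a^{-1}a}$ into $X_E$ does not enlarge $R$ beyond what $\Inv\langle X_E\mid R\rangle$ already supplies (it does not, since these are among the $x_e$), and that every $e\in E(S)\cap\overline{S}$ really is $vv^{-1}$ for some $v$ over $A\cup A^{-1}$ — this last point follows from $\overline{S}=\langle A\rangle$ together with the fact that in any inverse semigroup an idempotent $e=a_1\cdots a_n$ equals $(a_1\cdots a_n)(a_1\cdots a_n)^{-1}$. With that in hand, the two inclusions of derivable consequences match up and $\Inv\langle X_E,Y_A\mid R,C,T\rangle$ defines $S$, with $T\cup C$ finite relative to $R$, completing the proof.
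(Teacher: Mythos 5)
Your overall strategy --- anchor the idempotents $a^{\epsilon}a^{-\epsilon}$ with finitely many explicit relations and then reach the remaining idempotents of $\overline{S}$ by repeatedly peeling off a conjugating letter via $C$ --- is exactly the mechanism of the paper's proof, and your soundness check for $C$, the role of $T_1$, and the final Tietze elimination are all fine. However, the proposal rests on the claim that $\overline{S}=\langle A\rangle$, and this is false in general, which breaks the argument at two separate points. For a counterexample take $A=\{u\}$ with $u=x^2=(0,2,2)$ and $S=\langle (0,0,5),\,u\rangle$; then $S=\langle E(S),A\rangle$ and $A$ consists of non-idempotents, but the non-idempotent $s:=(0,0,5)\cdot u=(0,2,5)$ lies in $\overline{S}$ while $ss^{-1}=(0,0,5)\notin E(\langle u\rangle)=\{(-2i,0,2j)\colon i,j\in\mathbb{N}_0,\ (i,j)\neq(0,0)\}$, so $s\notin\langle A\rangle$. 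The point is that a product of elements of $E(S)\cup A\cup A^{-1}$ representing a non-idempotent must indeed involve a letter of $A$, but its idempotent factors need not lie in $\langle A\rangle$, so the product need not either; \cref{prop:ideal} says that $\overline{S}$ absorbs such products, not that $\langle A\rangle$ does.

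The consequences are: (i) there need be no presentation of $\overline{S}$ on the generating set $Y_A$ alone, so you cannot invoke \cref{thm:new_amlagam} with $Y=Y_A$ and a finite $Q_0$; and (ii) your induction only derives the relations $w_e=x_e$ for those $e\in E(S)\cap\overline{S}$ of the form $vv^{-1}$ with $v$ a word over $Y_A\cup Y_A^{-1}$, i.e.\ for $e\in E(\langle A\rangle)$, which by the example above can be a proper subset of $E(S)\cap\overline{S}$; hence not all of $P$ is recovered. The paper repairs exactly this: it extends $A$ to a finite generating set $Y\supseteq Y_A$ of $\overline{S}$ (possible by \nameref{thm:new_S_bar}), adds the finitely many relations $y_1=w_{y_1}$ for $y_1\in Y\setminus Y_A$, where $w_{y_1}$ is a word over $X_E\cup Y_A$ (such words exist precisely because $S=\langle E(S),A\rangle$), and then runs your peeling argument on words over $X_E\cup Y_A$ rather than over $Y_A$ alone --- which needs the extra step of collapsing the block of $X_E$-letters following the last $Y_A$-letter into a single $x_f$ using $R$ before applying $C$ (or applying $T_1$ when that block is empty). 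With those modifications your argument becomes the paper's proof; as written, it has a genuine gap.
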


\begin{proof}
For $a \in A$ and $\epsilon=\pm 1$ both $y_{a}^\epsilon y_{a}^{-\epsilon}$ and $x_{a^\epsilon a^{-\epsilon}}$ correspond to the idempotent $a^\epsilon a^{-\epsilon}$. 
Define:
\begin{align*}
    T_1 := \{y_a^\epsilon y_a^{-\epsilon} = x_{a^\epsilon a^{-\epsilon}}\::\:  a\in A,\ \epsilon=\pm 1\}.
\end{align*}
By \nameref{thm:new_S_bar} the inverse subsemigroup $\overline{S}$ generated by all non-idempotent elements of $S$ is finitely presented.
Since $A$ consists of non-idempotent elements, 
it can be extended to a finite generating set for $\overline{S}$,
and then $\overline{S}$ can be defined by a finite presentation over the resulting generating set.
In other words, $\overline{S}$ is defined by a finite presentation 
$\Inv \langle Y \mid Q \rangle$ where $Y_A \subseteq Y$. 
We write $Y_1$ for the set $Y\setminus Y_A$. Each $y_1 \in Y_1$ corresponds to an element in $S$. This element can be written as a product of generators $E(S)\cup A \cup A^{-1}$, and so there is a word $w_{y_1}$ over 
$X_E\cup Y_A$ which corresponds to the element. 
Let
\begin{align*}
    T_2 := \{ y_1 = w_{y_1} \colon y_1\in Y_1\}.
\end{align*}

We claim that $S$ is defined by 
\begin{align}
    \Inv \langle X_E, Y\mid R, Q, C, T_3\rangle \label{eq:new_conj_1}
\end{align}
where $T_3 := T_1 \cup T_2$. 
Once we prove the claim, the proof of the corollary is easy to complete. Indeed, the generators in $Y_1$ are redundant via relations $T_2$. Thus we may eliminate those generators, which results in 
removing the relations $T_2$, and substituting $w_{y_1}$ for $y_1$ in all other relations.
But, in fact $y_1$ can only appear in relations $Q$, which after the substitution become a new finite set of relations $Q'$.
 Then, defining $T:=Q' \cup T_1$ completes the proof of the corollary.

We now turn to the proof of the claim.
Recall the set $P$ of relations in \cref{thm:new_amlagam}:
\begin{align*}
    P = \{w_e = x_e \colon e \in E(S)\cap \overline{S}\}
\end{align*}
where each $w_e$ is an arbitrary word over $Y$ which represents the idempotent $e$. 
Thus, we may take $w_e$ to have the form $w_e\equiv w_1w_1^{-1}$, $w_1\in (Y\cup Y^{-1})^+$.
By \cref{thm:new_amlagam}, $S$ is defined by
\begin{align*}
    \Inv \langle X_E, Y \mid R, Q, P \rangle.
\end{align*}
It is clear that $S$ satisfies the relations in $C \cup T_3$, and so the set $R\cup Q \cup P$ of relations implies $C \cup T_3$. We need to prove that the set $R \cup Q\cup C\cup T_3$ of relations implies $P$. 
To this end consider an arbitrary $e \in E(S)\cap \overline{S}$.

First, we  use relations from $T_2$ to eliminate generators from $Y_1$ in $w_1$, yielding a word
$w_2\in (X_E\cup X_E^{-1}\cup Y_A\cup Y_A^{-1})^+$, which must contain at least one occurrence of a letter from $Y_A$.
Thus $w_e=w_2 w_2^{-1}$ as a consequence of $R \cup Q\cup C\cup T_3$.

Now consider the last occurrence of a letter from $Y_A$ in $w_2$; in other words suppose
$w_2 \equiv uy_a^\epsilon v$, where $y_a\in Y_A$, $\epsilon=\pm 1$, and $v\in (X_E\cup X_E^{-1})^*$.
If the word $v$ is empty, denoting by $g$  the idempotent represented by $a^\epsilon a^{-\epsilon}$, we have
\[
w_e=w_2 w_2^{-1}\equiv u y_a^\epsilon y_a^{-\epsilon} u^{-1}
\stackrel{T_1}{=} u x_g u^{-1}
\stackrel{R}{=} u x_g x_g^{-1} u^{-1}.
\]
When $v$ is non-empty, it is equal to some $x_f$ as a consequence of $R$; 
denoting by $g$ the idempotent represented by $a^\epsilon fa^{-\epsilon}$, we have:
\[
w_e=w_2w_2^{-1} \stackrel{R}{=} u y_a^\epsilon x_f x_f^{-1} y_a^{-\epsilon} u^{-1}
\stackrel{R}{=}u y_a^\epsilon x_f  y_a^{-\epsilon} u^{-1}
\stackrel{C}{=} u x_g   u^{-1}
\stackrel{R}{=} u x_g x_g^{-1}   u^{-1}.
\]
Denoting $u x_g$ by $w_3$, in each case we have proved that
$w_e=w_2w_2^{-1}=w_3 w_3^{-1}$ as a consequence of $R \cup Q\cup C\cup T_3$, and the number of occurrences of letters from $Y_A$ in $w_3$ is one less than in $w_2$.
Continuing in this way we can successively eliminate all the occurrences of letters from $Y_A$,
and therefore deduce $w_e=x_e$ as a consequence of $R \cup Q\cup C\cup T_3$, which is the desired relation from $P$.
This completes the proof of the claim, and hence of the corollary.
\end{proof}

Finally, we return to our main question of whether every inverse subsemigroup of $FI_1$ is finitely presented modulo the semilattice of its idempotents. The following gives the negative answer:

\begin{namedtheorem}[Theorem C]
\label{thm:new_no_fin}
Let $S$ be a non-semilattice inverse subsemigroup of the monogenic free inverse semigroup. Then, $S$ is not finitely presented modulo the semilattice of its idempotents.    
\end{namedtheorem}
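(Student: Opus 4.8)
The plan is to argue by contradiction. Suppose $S$ is finitely presented modulo $E:=E(S)$, say $S=\Inv\langle X_E,Y\mid R,Q\rangle$ with $\Inv\langle X_E\mid R\rangle$ a presentation of $E$ and $Y,Q$ finite. I would first put this presentation into a convenient form using earlier results: by \nameref{cor:gen} we may take $Y=\{y_a:a\in A\}$ with $A$ a finite set of non-idempotents and $\langle E,A\rangle=S$; by \nameref{thm:new_S_bar} we may enlarge $Y$ and $Q$ (finitely) so that $Y$ generates $\overline S$ and $Q$ contains the relations $y_ay_a^{-1}=x_{aa^{-1}}$, $y_a^{-1}y_a=x_{a^{-1}a}$ for $a\in A$; and by a Tietze transformation we may take $R$ to be the ``Cayley-table'' presentation $\{x_ex_f=x_{ef}:e,f\in E\}$ of the semilattice $E$. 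Let $E_0\subseteq E$ be the finite set of $e$ with $x_e$ occurring in $Q$, and let $M$ be the maximal length of a word occurring in $Q$.

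The heart of the argument is to construct an inverse semigroup $\Phi$ which satisfies all the relations $R\cup Q$ but in which some relation valid in $S$ fails; since every relation valid in $S=\Inv\langle X_E,Y\mid R,Q\rangle$ must be a consequence of $R\cup Q$, this is the contradiction. The idea is to build $\Phi$ by ``severing'' the link between $E$ and $\overline S$ away from a bounded region. Recall from \cref{thm:new_amlagam} and the ensuing remark that $S\cong E\ast_U\overline S$, the inverse-semigroup amalgamated free product of its factors over $U:=E\cap\overline S=E(\overline S)$. Work with $FI_1$-coordinates on $E(\overline S)$, which is a periodic subset of $E(FI_1)\cong(\mathbb N_0\times\mathbb N_0)\setminus\{(0,0)\}$. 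Choose $N$ larger than every coordinate of every element of $E_0$ and larger than the $\mathscr D$-index of every element of $FI_1$ expressible as a product of at most $M$ elements from $E_0\cup A\cup A^{-1}$; this is a finite bound. Let $U_0:=\{f\in E(\overline S):\text{both coordinates of }f\text{ are}\le N\}$, a finite sub-semilattice of $E(\overline S)$ (it is closed under the meet operation, which is componentwise maximum), and set
\[
\Phi:=E\ast_{U_0}\overline S .
\]
The factors $E$ and $\overline S$ embed in $\Phi$ and meet exactly in $U_0$ (see \cite{Bennett,StephenAmalgam}). The letters $x_e$ map into the factor $E$, so $\Phi$ satisfies $R$ automatically.

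The key step, which I expect to be the main obstacle, is to verify that $\Phi$ also satisfies every relation in $Q$. Both sides of a relation in $Q$ are words of length $\le M$ over $X_E\cup Y$, so they evaluate in $\Phi$ to elements of the sub-inverse-semigroup $\Psi$ generated by $\{x_e:e\in E_0\}$ together with the images of $Y$; and by the choice of $N$, every idempotent of $E(\overline S)$ arising in these evaluations has both coordinates $\le N$, hence lies in $U_0$. Using the normal-form theory of inverse-semigroup amalgams (in particular, the fact that the sub-inverse-semigroup of $A\ast_U B$ generated by subsemigroups $A_0\le A$ and $B_0\le B$ with $A_0\cap B_0\subseteq U$ is their amalgam $A_0\ast_{A_0\cap B_0}B_0$), one shows that the canonical surjection $\pi\colon\Phi\twoheadrightarrow S$ restricts to an isomorphism on $\Psi$. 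Since the relations of $Q$ hold in $S$ and involve only elements of $\pi(\Psi)$, they therefore hold in $\Phi$. Making this step fully rigorous is where the real work lies; alternatively, one can recast the whole argument via Stephen's iterative construction of the Sch\"utzenberger graph $\SGamma(x_{e_0})$ with respect to $\langle X_E,Y\mid R,Q\rangle$ and show directly that the word $w_{e_0}$ below fails to label a closed walk at the root, the point being that the relations $Q$ only ``see'' idempotents of bounded depth.

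Finally, choose a deep idempotent $e_0\in E(\overline S)$ with both coordinates strictly greater than $N$; such $e_0$ exist because $E(\overline S)$ is an infinite periodic set. Then $e_0\notin U_0$, and since $e_0\in E\cap\overline S$ we may pick a word $w_{e_0}\in(Y\cup Y^{-1})^+$ representing $e_0$ in $\overline S$. The relation $x_{e_0}=w_{e_0}$ holds in $S$, because both sides represent the idempotent $e_0$; hence it is a consequence of $R\cup Q$, and so it must also hold in $\Phi$. But in $\Phi$ the left-hand side lies in the factor $E$ and the right-hand side in the factor $\overline S$, and these factors meet only in $U_0$, which does not contain $e_0$; therefore $x_{e_0}\ne w_{e_0}$ in $\Phi$ — a contradiction. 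Consequently $S$ is not finitely presented modulo $E(S)$.
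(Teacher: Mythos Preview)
Your overall strategy---build a semigroup that satisfies the finitely many extra relations but witnesses the failure of a ``deep'' idempotent identification---is exactly the right idea, and your final contradiction via strong amalgamation is sound. The genuine gap is precisely where you flag it: the verification that $\Phi=E\ast_{U_0}\overline S$ satisfies every relation in $Q$. The lemma you invoke, that the inverse subsemigroup of $A\ast_U B$ generated by $A_0\le A$ and $B_0\le B$ is $A_0\ast_{A_0\cap B_0}B_0$, is not a standard fact about inverse-semigroup amalgams and is not obviously true in the generality you need. More concretely, your bound $N$ controls the $\mathscr D$-depth of products of length $\le M$ \emph{in $FI_1$}, but what you must control is the rewriting of such words \emph{in $\Phi$}; a relation $u=v$ in $Q$ may involve alternating letters from $X_E$ and $Y$, and to verify it in $\Phi$ one must pass idempotents across $\overline S$-letters, which in $\Phi$ is only licensed when the resulting conjugated idempotent lies in $U_0$. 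You have not shown that all the idempotents arising in such a derivation stay inside $U_0$, and there is no obvious a priori reason they should, since the derivation is happening in $\Phi$ rather than in $S$.

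The paper sidesteps this difficulty by a preliminary reduction that makes your ``bounded depth'' intuition trivially verifiable. Rather than analysing the arbitrary finite $Q$ directly, it first invokes \cref{thm:new_amlagam} to obtain the alternative presentation $\Inv\langle X_E,Y\mid R,Q',P\rangle$, where $Q'$ is a finite presentation for $\overline S$ (hence contains \emph{no} letters from $X_E$) and $P=\{w_e=x_e:e\in E(\overline S)\}$. Since the original finite $T$ (your $Q$) is a consequence of $R\cup Q'\cup P$, only a finite subset $P_1=\{w_{e_1}=x_{e_1},\dots,w_{e_k}=x_{e_k}\}$ is needed, so $\Inv\langle X_E,Y\mid R,Q',P_1\rangle$ still defines $S$. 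Now the only $X_E$-letters that appear outside $R$ are $x_{e_1},\dots,x_{e_k}$. Setting $f:=e_1\cdots e_k$ and choosing $g\in E(\overline S)$ with $g<f$, one runs Stephen's procedure on a $Y$-word representing $g$ and checks the invariant ``every edge label $x_h$ has $h\ge f$''; this is immediate because $Q'$ has no $x_h$'s, $P_1$ only introduces $x_{e_i}\ge f$, and $R$ (the Cayley table of $E$) preserves the order ideal. Hence $x_g$ never appears, contradicting $x_g=w_g$ in $S$. This is essentially the Sch\"utzenberger-graph alternative you mention, but the crucial point is the reduction to a presentation in which the non-$R$ relations involving $X_E$ are completely explicit, which is what makes the invariant argument one line rather than an open problem about amalgams.
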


\begin{proof}
Aiming for a contradiction, suppose that $S$ is finitely presented modulo $E(S)$.
Hence, $S$ can be defined by a presentation
\begin{align}
    \Inv \langle X_E, Y \mid R, T \rangle,    \label{eq:new_no_fin_1}
\end{align}
where $X_E= \{x_e \colon e\in E(S)\}$ is a (necessarily infinite) set in one to one correspondence with $E(S)$,
$\Inv\langle X_E\mid R\rangle$ is a presentation for $E$, and the sets $Y$ and $T$ are finite.
For the purposes of this proof, we will take $R$ to be the set of relations arising from the Cayley table of $E$, i.e. $x_ex_f=x_{ef}$, $e,f\in E$.

Let $\overline{S}$ be the inverse subsemigroup of $S$ generated by all non-idempotent elements of $S$. 
Using \nameref{thm:new_S_bar}, we may without loss assume that 
the set $Y$ corresponds to a finite set of non-idempotent elements in $S$, which generates $\overline{S}$.
Furthermore,  \nameref{thm:new_S_bar} also gives that $\overline{S}$ is defined by a finite presentation $\Inv \langle Y \mid Q\rangle$. 
Now \cref{thm:new_amlagam} implies that $S$ is defined by
\begin{align}
    \Inv \langle X_E, Y \mid R, Q, P\rangle, 
    \quad \text{with}\quad
    P := \{w_e = x_e \colon e \in E(S)\cap \overline{S}\},
    \label{eq:new_no_fin_2}
\end{align}
where $w_e\in (Y\cup Y^{-1})^+$ represents $e$.
Since both presentations  (\ref{eq:new_no_fin_1}) and $(\ref{eq:new_no_fin_2}$) define $S$, we can deduce $T$ from $Q\cup P$. 
Since $T$ is finite,
only finitely many relations from $Q\cup P$ are used in such a deduction. 
Hence there exists a finite subset $P_1$ of $P$ such that $S$ is defined by a presentation
\begin{align}
    \Inv \langle X_E, Y\mid R, Q, P_1\rangle,
    \quad \text{with} \quad P_1 = \{w_{e_1} = x_{e_1}, \dots, w_{e_k} = x_{e_k}\}.
 \label{eq:new_no_fin_3}
\end{align}

Consider the element $f:=e_1\dots e_k\in E(S)$. 
Then the relation $x_{e_1}\dots x_{e_k} = x_f$ is a consequence of $R$. Also, we have $x_f = w_f$ as a consequence of $R\cup Q \cup P_1$. 
Since $w_f$ is a word over $Y$, 
there is a letter $y \in Y$ and $\epsilon=\pm 1$ such that $w_f \leq y^\epsilon y^{-\epsilon}$. 
Recall that the semilattice of idempotents of $\langle y \rangle$ consists of the elements of the form $y^{-i}y^iy^jy^{-j}$ (see \cref{fig:semilattice_general}). 
In particular, there exist $p,q\in\mathbb{N}_0$ such that the idempotent $g$ represented by
$y^{-p}y^py^qy^{-q}$ is strictly smaller than $f$. 
The relation $y^{-p}y^py^qy^{-q}=x_g$ holds in $S$.
In particular, the word $x_g$ belongs to the language of the 
Sch\"{u}tzenberger graph $\SGamma(y^{-p}y^py^{q}y^{-q})$ with respect to $X_E\cup Y$.

On the other hand, $\SGamma(y^{-p}y^py^{q}y^{-q})$ can be obtained by using Stephen's procedure
with respect to the presentation \eqref{eq:new_no_fin_3}.
We are going to reach a contradiction by showing inductively that at every stage of the procedure the following holds:
\begin{equation}
\label{eq:xlab}
\text{if } x_h \text{ is an edge label then } h\geq f; 
\end{equation}
this will immediately imply that there will never be an edge labelled $x_g$.
This is certainly the case at the start of the procedure, i.e. for the linear graph of $y^{-p}y^py^{q}y^{-q}$ which contains no edges labelled by $x_h$ for any $h$.
Also, it is clear that a determination does not introduce new labels. 
Consider the possibility that an expansion of the graph that satisfies \eqref{eq:xlab} yields a graph that does not.
%\Ccomm{I'm not sure what `our property' is? Maybe I'm missing something...}
Notice that the relations from $Q$ contain no symbols from $X_E$, while those in $P_1$ contain only $x_{e_1},\dots,x_{e_k}$, and $e_1,\dots,e_k\geq f$.
Hence, none of these relations can be used to introduce a label $x_h$ with $h<f$.
The remaining relations $R$ have the form $x_{h'} x_{h''}=x_h$, where $h,h',h''\in E$ and $h=h'h''$.
Notice that $h\geq f$ if and only if $h',h''\geq f$, and it follows that this relation does not introduce a label corresponding to an idempotent below $f$ either.
This completes the proof of the claim, and the theorem follows.
\end{proof}

%%%%%%%%%%%%%%%%%%%%%%%%%%%%%%%%%%%%%%%%%%%%%%%%%%%%%%%%%%%%%%%%%%%%%%%%%%
%%%%%%%%%%%%%%%%%%%%%%%%%%%%%%%%%%%%%%%%%%%%%%%%%%%%%%%%%%%%%%%%%%%%%%%%%%

\section{Conclusions and further questions}
\label{sec:con}

To conclude the paper we make a remark which records an interesting relationship between an inverse subsemigroup $S$ of $FI_1$ and the subsemigroup $\overline{S}$ generated by the non-idempotents, and we also pose a few questions for possible further investigation.

\begin{proposition}
\label{prop:final}
Let $S$ be a non-semilattice inverse subsemigroup of $FI_1$, and let $\overline{S}$ be the inverse subsemigroup of $S$ generated by all non-idempotent elements of $S$. Then the following are
equivalent: \textup{(i)} $S$ is finitely generated; \textup{(ii)} $S$ is finitely presented;
\textup{(iii)} the set $S \setminus \overline{S}$ is finite.
\end{proposition}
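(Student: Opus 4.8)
The plan is to establish the cycle of implications (i)$\Rightarrow$(ii)$\Rightarrow$(iii)$\Rightarrow$(i), with the bulk of the work in (ii)$\Rightarrow$(iii); the other two directions are essentially already available from earlier results in the paper. For (i)$\Rightarrow$(ii), note that a finitely generated inverse subsemigroup of $FI_1$ is finitely presented by Theorem~\ref{thm:o&s} (the Oliveira--Silva theorem). For (iii)$\Rightarrow$(i), observe that $\overline{S}$ is finitely generated by \nameref{thm:new_S_bar}; if $S\setminus\overline{S}$ is finite then adjoining those finitely many extra elements to a finite generating set of $\overline{S}$ yields a finite generating set for $S=\langle\overline{S},\,S\setminus\overline{S}\rangle$. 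So the real content is (ii)$\Rightarrow$(iii), equivalently: if $S\setminus\overline{S}$ is infinite, then $S$ is not finitely presented.

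For this direction I would argue by contradiction, reusing the machinery of the proof of \nameref{thm:new_no_fin}. Suppose $S$ is finitely presented and yet $S\setminus\overline{S}$ is infinite. By \cref{prop:ideal} the set $S\setminus\overline{S}$ consists entirely of idempotents (it is contained in $E(S)$), so $E(S)\setminus\overline{S}$ is infinite; equivalently, infinitely many idempotents of $S$ do \emph{not} lie in $\overline{S}$. Now run the same strategy as in \nameref{thm:new_no_fin}: pass from a hypothetical finite presentation of $S$ to the amalgamated presentation $\Inv\langle X_E, Y\mid R, Q, P\rangle$ of \cref{thm:new_amlagam} (using that $\overline{S}$ is finitely presented by \nameref{thm:new_S_bar}), deduce that only finitely many relations $w_{e_1}=x_{e_1},\dots,w_{e_k}=x_{e_k}$ from $P$ are needed, and set $f:=e_1\cdots e_k$. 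The key point is that every relation in the resulting finite presentation $\Inv\langle X_E, Y\mid R, Q, P_1\rangle$ either contains no symbol $x_h$ (the relations $Q$), or contains only $x_{e_1},\dots,x_{e_k}$ with each $e_i\ge f$ (the relations $P_1$), or is of the form $x_{h'}x_{h''}=x_{h'h''}$ from $R$, which introduces a label $x_h$ with $h<f$ only if one of $h',h''$ is already below $f$. Hence, running Stephen's procedure for any word, an edge label $x_h$ with $h<f$ can never be created from scratch.

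To obtain the contradiction I would now exhibit an idempotent $g<f$ of $S$ that is \emph{not} below any of $e_1,\dots,e_k$ individually in a way reachable by these relations, and that is expressible purely in terms of the non-idempotent generators $Y$. Since $E(S)\setminus\overline{S}$ is infinite but $\{e_1,\dots,e_k\}$ is finite, there are infinitely many idempotents of $S$ strictly below $f$; among idempotents that \emph{do} lie in $\overline{S}$ (these are plentiful, being of the form $y^{-i}y^iy^jy^{-j}$ for the various $y\in Y$, by \cref{fig:semilattice_general}), pick one, say $g$, with $g<f$ — such a $g$ exists exactly as in the proof of \nameref{thm:new_no_fin}. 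Then $g\in E(S)\cap\overline{S}$, so the relation $w_g=x_g$ holds in $S$ where $w_g$ is a word over $Y$; thus $x_g$ lies in the language of the Sch\"utzenberger graph $\SGamma(w_g)$ computed with respect to the presentation $\Inv\langle X_E, Y\mid R, Q, P_1\rangle$. But $w_g$ contains no letters from $X_E$, so the linear graph of $w_g$ has no $x_h$-labelled edges, and by the invariant \eqref{eq:xlab} established above — namely, any $x_h$-edge that ever appears satisfies $h\ge f$ — no edge labelled $x_g$ can ever be created, since $g<f$. This contradicts $x_g\in L(\SGamma(w_g))$, completing the proof that $S$ is not finitely presented.

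\textbf{Main obstacle.} The delicate part is verifying that the invariant \eqref{eq:xlab} truly survives every expansion step of Stephen's procedure with respect to $\Inv\langle X_E, Y\mid R, Q, P_1\rangle$, and in particular that one really can choose $g\in E(S)\cap\overline{S}$ with $g<f$ represented by a $Y$-word: this requires knowing that $\overline{S}$, being non-trivial (as $S$ is a non-semilattice), contains a non-idempotent $y$ whose monogenic subsemigroup's idempotent semilattice reaches strictly below $f$, which follows from Reilly's result that $\langle y\rangle\cong FI_1$ and the periodic structure pictured in \cref{fig:semilattice_general}. Once those two points are in hand, the argument is essentially a restatement of the proof of \nameref{thm:new_no_fin}, now driven by the hypothesis that $S\setminus\overline{S}$ is infinite rather than merely non-empty.
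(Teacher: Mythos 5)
Your implications (i)$\Rightarrow$(ii) and (iii)$\Rightarrow$(i) are correct and coincide with the paper's. The problem is the third leg of your cycle. The paper closes it with (i)$\Rightarrow$(iii), using \cref{prop:ideal}: if $S=\langle A\rangle$ with $A$ finite, then since $\overline{S}$ is an ideal, any product of generators with at least one factor in $\overline{S}$ lands in $\overline{S}$, so $S\setminus\overline{S}\subseteq\langle A\setminus\overline{S}\rangle$; and $A\setminus\overline{S}$ is a finite set of idempotents, which generates a finite semilattice. Your (ii)$\Rightarrow$(iii) instead tries to rerun the proof of \nameref{thm:new_no_fin}, and this contains a genuine gap.

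The gap is the step ``pass from a hypothetical finite presentation of $S$ to the amalgamated presentation and deduce that only finitely many relations from $P$ are needed''. In the proof of \nameref{thm:new_no_fin} that deduction is legitimate only because the hypothetical presentation $\Inv\langle X_E, Y\mid R, T\rangle$ is over the \emph{same} infinite generating set $X_E\cup Y$ and already contains all of $R$, so that merely the finite set $T$ has to be derived from $R\cup Q\cup P$. An absolute finite presentation of $S$ lives over a finite alphabet; to compare it with the amalgamated presentation one needs infinitely many ``bridge'' relations expressing the $x_e$ in terms of the finite generating set, and there is no reason why only finitely many members of $P$ should survive the comparison. Indeed, your argument as written never actually uses the hypothesis that $S\setminus\overline{S}$ is infinite: the idempotent $g<f$ you produce lies in $E(S)\cap\overline{S}$ and exists for every non-semilattice $S$. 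So if the reduction were valid, the same argument would show that \emph{no} non-semilattice inverse subsemigroup of $FI_1$ is finitely presented --- contradicting, for instance, $FI_1=\langle x\rangle$ itself, or \nameref{thm:new_S_bar} combined with Theorem~\ref{thm:o&s}. To repair the proof, drop (ii)$\Rightarrow$(iii) and argue (i)$\Rightarrow$(iii) directly via the ideal property of $\overline{S}$ as above.
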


\begin{proof}
The equivalence of (i) and (ii) is an immediate consequence of Theorem~\ref{thm:o&s}.
Also, (iii)$\Rightarrow$(i) is an easy consequence of the fact that $\overline{S}$ is finitely generated (\nameref{thm:new_S_bar}).
For (i)$\Rightarrow$(iii) suppose that $S$ is finitely generated, say by $A$.
Recall that $\overline{S}$ is an ideal of $S$ (\cref{prop:ideal}).
It follows that $S\setminus\overline{S}\subseteq \langle A\setminus\overline{S}\rangle$.
But $A\setminus\overline{S}$ is a finite set of idempotents, and the subsemigroup generated by it is also finite.
%
%By \nameref{thm:new_S_bar}, the semigroup $\overline{S}$ is finitely generated. If the set $S\setminus \overline{S}$ is finite, then $S$ is finitely generated, and finite presentability follows from Theorem~\ref{thm:o&s}.
%Conversely, suppose $S$ is finitely
%
%Suppose the set $S\setminus \overline{S}$ is infinite.
%It is easy to see that there is an infinite descending chain of idempotents 
%$ \alpha_1 > \alpha_2 > \dots$ in $S\setminus\overline{S}$.
%Suppose that $S$ is generated by a finite set $A$. The set $A$ can be partitioned into $A_1$ and $A_2$ where $A_1 \subseteq \overline{S}$ and $A_2$ is disjoint from $\overline{S}$. The set $A_2$ consists of finitely many idempotents, and so $\langle A_2 \rangle$ is finite. Note that either for all $i\in \mathbb{N}$, $\alpha_i$ is incomparable to $e$ for all $e \in \langle A_2 \rangle$ or there exists $N\in \mathbb{N}$ such that $\alpha_N < e$ for all $e \in \langle A_2 \rangle$. In any case, there is $n\in \mathbb{N}$ such that $\alpha_n \notin \langle A_2 \rangle$. Since $\alpha_n \notin \overline{S}$ and $\overline{S}$ is an ideal, $\alpha_n\notin \langle A_2 \rangle$ implies $\alpha_n \notin \langle A\rangle$, which is a contradiction.
\end{proof}

%We have completely characterised generating sets and presentations of inverse subsemigroups of $FI_1$. An inverse semigroup of $FI_1$ is either finitely generated in which case it is finitely presented, or it is generated by its idempotents and a finite set of non-idempotents in which case we need infinitely many relations together with a presentation for its semilattice of idempotents. 

%We list a few questions regarding subsemigroups and inverse subsemigroups of the monogenic free inverse semigroup. 

One natural direction for further work arising from this paper is to consider `plain' subsemigroups of $FI_1$, and ask about their finite generation and presentability.
\citet{Schein1975} proved that free inverse semigroups, including $FI_1$, are not finitely presented as semigroups. On the other hand, $FI_1$ certainly contains finitely presented subsemigroups, e.g. finite semilattices of idempotents, and lots of copies of the monogenic free semigroup $\mathbb{N}$.
The question of characterising finite presentability of subsemigroups of $FI_1$ is considered in the forthcoming paper \cite{paper2}.

Another natural direction is to investigate inverse subsemigroups of free inverse semigroups of higher rank.
%A natural question to ask is whether we can have similar results for inverse subsemigroups of a free inverse semigroup of rank greater than or equal to $2$.
%
%\begin{question}
%Can we obtain a similar characterisation for inverse subsemigroups of a free inverse semigroup of rank greater than or equal to $2$? \citet{Reilly1972} showed that the free inverse semigroup of rank 2 contains every free inverse semigroup of countable rank, so the main question is that if we can obtain the characterisation for inverse subsemigroups of the free inverse semigroup of rank 2.
%\end{question}
%
\citet{Oliveira2005} showed that there is a finitely generated inverse subsemigroup of a free inverse semigroup of rank greater than or equal to $2$ which is not finitely presented. The next question is motivated by this result, which was also noted in~\cite{Oliveira2005}. 

\begin{question}[\cite{Oliveira2005}, Problem 6.2]
\label{question:rank_geq_2}
Is it decidable whether a finitely generated inverse subsemigroup of a free inverse semigroup of rank greater than or equal to $2$ is finitely presented or not?
\end{question}

A related question  is to decide if an inverse subsemigroup of a free inverse semigroup is free. \citet{Reilly1972} gave a necessary and sufficient condition for an inverse subsemigroup of a free inverse semigroup being free inverse, but it is not clear whether this condition translates into a decision algortithm.

\begin{question}
Is it decidable whether a finitely generated inverse subsemigroup of a free inverse semigroup is free inverse or not?
\end{question}

%%%%%%%%%%%%%%%%%%%%%%%%%%%%%%%%%%%%%%%%%%%%%%%%%%%%%%%%%%%%%%%%%%%%%%%%%%%%%%%%%%%%%%%%%%%%%%%%%%%%%%%%%%%%%%%%%%%%%%%%%%%%%%%%%%%%%%%%%%%%%%%%%%%%%%%%
%%%REFERENCES%%%

\end{document}